\documentclass[10pt, a4paper]{article}
\usepackage{amsmath, amsfonts}
\usepackage[usenames]{color}
\usepackage[latin1]{inputenc}
\usepackage{hyperref} 
\pdfoutput=1
\hypersetup{bookmarks = true, 
                  colorlinks = true, 
                  linkcolor = blue, 
                  citecolor = blue, 
                  }
\openup4pt

\oddsidemargin=0.6cm \textwidth=18cm \textheight=24.7cm
\usepackage{graphicx}
\usepackage{amssymb}
\usepackage{amsmath}

\newtheorem{theorem}{Theorem}[section]

\newtheorem{conjecture}[theorem]{Conjecture}
\newtheorem{corollary}[theorem]{Corollary}

\newtheorem{definition}[theorem]{Definition}
\newtheorem{example}[theorem]{Example}

\newtheorem{lemma}[theorem]{Lemma}

\newtheorem{proposition}[theorem]{Proposition}
\newtheorem{remark}[theorem]{Remark}

\setlength{\topmargin}{-1cm} \setlength{\oddsidemargin}{-1cm}
\setlength{\evensidemargin}{-1cm} \setlength{\textwidth}{18cm}
\setlength{\textheight}{24.5cm}
\newenvironment{proof}[1][Proof]{\noindent\textbf{#1.} }{\ \rule{0.5em}{0.5em}}

\title{\textbf{Whitney equisingularity of families of surfaces in $\mathbb{C}^3$}}
\author{ \ \ \ \\{M.A.S. Ruas\footnote{M.A.S. Ruas: ICMC, Universidade de São Paulo, Av. do Trabalhador São Carlense,
Centro, São Carlos, SP 13560-970, Brazil. e-mail: maasruas@icmc.usp.br} $\ \ $ and $\ \ $ O.N. Silva\footnote{O.N. Silva: ICMC, Universidade de São Paulo, Av. do Trabalhador São Carlense,
Centro, São Carlos, SP 13560-970, Brazil. e-mail: otoniel@icmc.usp.br, work supported by CAPES.}}}
\date{}

\begin{document}

\maketitle

\begin{abstract}

In this work, we study families of singular surfaces in $\mathbb{C}^3$ parametrized by $\mathcal{A}$-finitely determined map germs. We consider the topological triviality and Whitney equisingularity of an unfolding $F$ of a finitely determined map germ $f:(\mathbb{C}^2,0)\rightarrow(\mathbb{C}^3,0)$. We investigate the following conjecture: topological triviality implies Whitney equisingularity of the unfolding $F$? We provide a complete answer to this conjecture, given counterexamples showing how the conjecture can be false.
\end{abstract}

\section{Introduction}\label{section1}


$ \ \ \ \ $ Let $f:(\mathbb{C}^{n},0)\rightarrow (\mathbb{C}^{p},0)$ be a finitely determined map germ. Given a $1$-parameter unfolding $F:(\mathbb{C}^{n} \times \mathbb{C},0)\rightarrow (\mathbb{C}^{p} \times \mathbb{C},0)$ of $f$ defined by $F(x,t)=(f_{t}(x),t)$, we assume it is origin preserving, that is, $f_{t}(0)=0$ for any $t$. Then, we have a 1-parameter family of finitely determined map germs $f_{t}:(\mathbb{C}^{n},0)\rightarrow(\mathbb{C}^{p},0)$. 

By Thom's second isotopy lemma for complex analytic maps (\cite{gibson2}, Theorem $5.2$), every unfolding $F$ of $f$ which is Whitney equisingular is also topologically trivial. However, we know that the converse is not true in general (for instance, see \cite{damon3}, Example $6.2$).

The work of Lê and Teissier at the beginning of the 1980s led to characterizations of Whitney equisingularity by the constancy of a finite sequence of polar multiplicities, (see \cite{Teissier1} and \cite{Teissier2}). Some time later, Gaffney \cite{gaffney} gave necessary and sufficient conditions to characterize the Whitney equisingularity of families of finitely determined map germs $f_{t}:(\mathbb{C}^n,0)\rightarrow (\mathbb{C}^p,0)$. For families $f_{t}:(\mathbb{C}^2,0)\rightarrow (\mathbb{C}^3,0)$, he showed the following result:

\begin{theorem}\rm(\cite{gaffney} Theorem 8.7 and Corollary 8.9)\label{13} \textit{If $f:(\mathbb{C}^2,0)\rightarrow(\mathbb{C}^3,0)$ is finitely determined and $F$ is a $1$-parameter unfolding of $f$, then :}

\begin{flushleft}
\textit{(a) $F$ is Whitney equisingular if and only if $\mu(D(f_{t}))$, $m_{1}(f_{t}(\mathbb{C}^2),0)$ and $m_{0}(f_{t}(D(f_{t}))$ are constant.}\\

\textit{(b) If $f$ has corank $1$,  $F$ is Whitney equisingular if and only if $\mu(D(f_{t}))$ and $m_{0}(f_{t}(D(f_{t}))$ are constant.}
\end{flushleft}
\textit{where $D(f_{t})$ is the double point curve of $f_{t}$, $m_{1}(f_{t}(\mathbb{C}^2),0)$ is the first polar multiplicity of $f_{t}(\mathbb{C}^2)$ and $m_{0}(f_{t}(D(f_{t})))$ is the Hilbert-Samuel multiplicity $e(m_{f_{t}(D(f_{t}))},\mathcal{O}_{f_{t}(D(f_{t}))})$ where $m_{f_{t}(D(f_{t}))}$ is the maximal ideal of the local ring $\mathcal{O}_{f_{t}(D(f_{t}))}$ of $f_{t}(D(f_{t}))$}.
\end{theorem}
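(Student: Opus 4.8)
The plan is to derive both parts from Gaffney's general criterion for Whitney equisingularity of finitely determined map germs --- namely that a one-parameter unfolding $F$ is Whitney equisingular exactly when the polar multiplicities of the closures of all stable types, in source and target, are constant along the parameter axis --- combined with the L\^{e}--Teissier principle (cited in the introduction) that the Whitney conditions for a pair of strata along a smooth stratum are equivalent to the constancy of the associated sequence of polar multiplicities. So the first step is to fix the canonical Whitney stratification of the total image induced by the stable types, whose strata over a fixed parameter value are the immersive (smooth) part of $X_t=f_t(\mathbb{C}^2)$, the image double point curve $f_t(D(f_t))$ away from triple points, the isolated triple points $A_0^3$ and the isolated cross-caps $A_1$, together with the parameter axis. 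The goal is then to show that controlling the finitely many polar multiplicities attached to these strata collapses to the three (respectively two) invariants in the statement.

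Next I would enumerate the polar multiplicities that genuinely appear. Since the source $\mathbb{C}^2$ and the parameter axis are smooth, their contribution is automatic. The zero-dimensional strata contribute only the numbers $C(f_t)$ of cross-caps and $T(f_t)$ of triple points; by the Marar--Mond formulae these are expressible through the double point data, so their constancy will follow from that of $\mu(D(f_t))$ and $m_0(f_t(D(f_t)))$. This reduces the problem to the polar multiplicities of the two positive-dimensional strata: the image surface $X_t$, with sequence $m_0(X_t,0),m_1(X_t,0),m_2(X_t,0)$, and the image double point curve $f_t(D(f_t))$, with multiplicity $m_0(f_t(D(f_t)))$.

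I would then eliminate the redundant entries of the surface sequence. The top term $m_2(X_t,0)$ is the multiplicity $e(\mathcal{O}_{X_t,0})$ of the surface, and $m_0(X_t,0)$ is the multiplicity of its zero-dimensional polar variety; using the L\^{e}--Teissier formulae relating polar multiplicities to the local Euler obstruction and to Milnor numbers of generic plane sections, together with the explicit description of the stable types, both should be expressible through the double point invariants, so that among the surface terms only $m_1(X_t,0)$ must be imposed independently. On the curve side, the equisingularity of $D(f_t)$ as a plane curve in the source is exactly captured by $\mu(D(f_t))$ (via the L\^{e}--Ramanujam theorem and the $\delta$-constant/Teissier theory for curves), while the embedded geometry of its image is captured by $m_0(f_t(D(f_t)))$. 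Assembling these, the surviving independent conditions are precisely $\mu(D(f_t))$, $m_1(f_t(\mathbb{C}^2),0)$ and $m_0(f_t(D(f_t)))$, which proves (a).

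For (b), in corank $1$ the double point curve admits the explicit description coming from the cross-cap normal form, and the Marar--Mond relations express the image Milnor number, and hence $m_1(f_t(\mathbb{C}^2),0)$, as a function of $\mu(D(f_t))$, the counts $C(f_t)$ and $T(f_t)$, and $m_0(f_t(D(f_t)))$. Since in corank $1$ the cross-cap and triple point numbers are themselves determined by the double point invariants, the first polar multiplicity of the surface becomes redundant, leaving only $\mu(D(f_t))$ and $m_0(f_t(D(f_t)))$. I expect the main obstacle to be precisely this redundancy step: showing rigorously, with the L\^{e}--Teissier polar formulae and the corank-$1$ double point structure, that $m_1(X_t,0)$ carries no information beyond the two curve invariants, since this is where the interplay between the polar geometry of the image surface and the analytic type of the double point curve is most delicate.
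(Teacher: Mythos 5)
First, a caveat: the paper does not prove Theorem \ref{13} at all --- it is quoted verbatim from Gaffney (\cite{gaffney}, Theorem 8.7 and Corollary 8.9), so there is no internal argument to compare yours against. Measured against Gaffney's actual proof, your outline does identify the right overall strategy: stratify source and target by the stable types (immersive points, image double point curve, cross-caps, triple points, the parameter axis), invoke the L\^e--Teissier characterisation of the Whitney conditions by constancy of polar multiplicities, and then prune the resulting list of invariants using the multiple-point formulas. That is indeed how \cite{gaffney} proceeds.

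However, as written the proposal has concrete gaps. (i) Gaffney's polar-multiplicity criterion is proved for \emph{excellent} unfoldings, and you never show that constancy of the three listed invariants forces excellence; in this paper that step is supplied separately (Theorem 4.2: $\mu$-constancy implies the unfolding is excellent), and without it the ``if'' direction of (a) does not get off the ground. (ii) Your indexing of the polar multiplicities of the image surface is reversed relative to the convention of the statement: here $m_0(f_t(\mathbb{C}^2),0)$ is the multiplicity of the surface and $m_1$ is the first polar multiplicity (compare Lemma \ref{5}(a)), whereas you call $m_2$ the multiplicity. (iii) The decisive reductions --- that $m_0(f_t(\mathbb{C}^2),0)$, the top polar multiplicity, and the polar invariants of $D(f_t)$ in the source are all controlled by $\mu(D(f_t))$, $m_1(f_t(\mathbb{C}^2),0)$ and $m_0(f_t(D(f_t)))$ --- are exactly the content of the theorem, and you only assert that they ``should be expressible'' through double point data; no formula is produced. (iv) For part (b) the mechanism you propose (Marar--Mond relations for the image Milnor number) is not the one that works: the point is that in corank $1$ the generic hyperplane preimage $\tilde{Y}_t=f_t^{-1}(H)$ is smooth, so Lemma \ref{5}(a) gives $m_1(f_t(\mathbb{C}^2),0)=m_0(f_t(\mathbb{C}^2),0)-1$ and the first polar multiplicity becomes redundant; alternatively (b) follows from Theorem \ref{16} combined with the identity $C+2J+6T=\mu(D(f))+2m_0(f(D(f)))-1$ of Remark \ref{31} and the semicontinuity of $C$ and $T$. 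Until (i) and (iii) are filled in with actual formulas, the proposal is a plan rather than a proof.
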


We have already mentioned above that if $F$ is Whitney equisingular then $F$ is topologically trivial. The first author conjectured in 1994 (see \cite{ruas2}) that in the case of families of finitely determined map germs $f_{t}:(\mathbb{C}^2,0)\rightarrow (\mathbb{C}^3,0)$ the converse is true. Moreover, conjectured that the unique necessary (and sufficient) invariant to control Whitney equisingularity is the Milnor number $\mu(D(f_{t}))$ of the double point curve of $f_{t}$.

\begin{conjecture}\rm(\cite{ruas2}, Ruas's conjecture) \label{conjecture1} \textit{Let $f:(\mathbb{C}^2,0)\rightarrow (\mathbb{C}^3,0)$ be a finitely determined map germ and $F:(\mathbb{C}^2 \times \mathbb{C},0)\rightarrow (\mathbb{C}^3 \times \mathbb{C},0)$ a $1$-parameter unfolding of $f$. Suppose that $\mu(D(f_{t}))$ is constant, then $F$ is Whitney equisingular.}
\end{conjecture}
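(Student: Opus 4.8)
The plan is to use Gaffney's characterization (Theorem \ref{13}) to turn the hypothesis into a bookkeeping problem. Since $\mu(D(f_{t}))$ is assumed constant, proving Whitney equisingularity of $F$ reduces to showing that the remaining invariants in Theorem \ref{13} are forced to be constant as well. In the general case (part (a)) these are the first polar multiplicity $m_{1}(f_{t}(\mathbb{C}^2),0)$ of the image surface and the Hilbert-Samuel multiplicity $m_{0}(f_{t}(D(f_{t})))$ of the image of the double point curve; in the corank $1$ case (part (b)) only $m_{0}(f_{t}(D(f_{t})))$ survives. So the conjecture amounts to the single implication: constancy of $\mu(D(f_{t}))$ forces constancy of $m_{1}$ and of $m_{0}$.

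First I would exploit what $\mu$-constancy gives in the source. The double point curve $D(f_{t})$ is a reduced plane curve in $(\mathbb{C}^2,0)$, and for families of plane curves constancy of the Milnor number is classically equivalent to equisingularity and to constancy of the topological type; in particular the multiplicity of $D(f_{t})$ in the source, its number of branches, and its Puiseux data are all preserved, and one gets a simultaneous resolution. From this I would try to deduce constancy of the auxiliary discrete invariants attached to $f_{t}$, namely the number of triple points $T(f_{t})$ and the number of cross-caps $C(f_{t})$, using the Marar-Mond determinantal descriptions of the double and triple point spaces, which express $T$ and $C$ as lengths computed from the double point ideal. If these lengths are governed by the topological type of $D(f_{t})$, they remain constant along the family.

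Next I would try to transfer this control to the target. The restriction $f_{t}|_{D(f_{t})}\colon D(f_{t}) \to f_{t}(D(f_{t}))$ is generically two-to-one, and the source double curve is, up to normalization, a model for the image double curve; so the idea is to read off $m_{0}(f_{t}(D(f_{t})))$ from the source multiplicity together with the identification pattern of the branches, and to read off the polar multiplicity $m_{1}(f_{t}(\mathbb{C}^2),0)$ from a generic linear projection of the image surface. The optimistic scenario is that $\mu$-constancy rigidifies all of this: a constant topological type of $D(f_{t})$ would pin down how many branches get identified and with what tangency, forcing both the image double curve and the generic polar curve of the image to be equisingular, hence $m_{0}$ and $m_{1}$ constant.

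The hard part, and where I expect the argument to break, is precisely this last transfer. The invariant $\mu(D(f_{t}))$ is intrinsic to the \emph{source} plane curve, whereas $m_{1}(f_{t}(\mathbb{C}^2),0)$ and $m_{0}(f_{t}(D(f_{t})))$ are invariants of the \emph{embedded} image in $\mathbb{C}^3$. Nothing obviously forces the embedding data to be a function of the abstract equisingularity type of $D(f_{t})$: a $\mu$-constant deformation of the source double curve could be accompanied by a change in how its branches are positioned and identified in the target, altering the multiplicity of the image curve or the polar multiplicity of the image surface while leaving $\mu(D(f_{t}))$ untouched. This slack is most visible in corank $\ge 2$, where the extra invariant $m_{1}$ appears in part (a) of Theorem \ref{13} but is entirely invisible to $\mu(D(f_{t}))$; I would therefore search there for a family realizing constant $\mu(D(f_{t}))$ with jumping $m_{1}$ (or $m_{0}$), which is exactly the mechanism by which the conjecture could fail.
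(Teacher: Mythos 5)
Your proposal cannot be completed, and for the right reason: the statement you are asked to prove is false, and the paper's Section \ref{section5} is devoted to refuting it rather than proving it. Your final paragraph correctly identifies the fatal gap --- $\mu(D(f_t))$ is an invariant of the abstract plane curve $D(f_t)$ in the source, while $m_1(f_t(\mathbb{C}^2),0)$ and $m_0(f_t(D(f_t)))$ depend on how the branches of $D(f_t)$ are identified and embedded in the target, and nothing forces the latter to be constant under $\mu$-constancy. The paper realizes exactly the failure you predict: in Example \ref{oprimeiroexemplo}, the family $f_t(x,y)=(x^2+txy,\ x^2y+xy^2+y^3,\ x^5+y^5)$ has $\mu(D(f_t))=441$ constant, but $\mu(\tilde{Y}_0,0)=2\neq 1=\mu(\tilde{Y}_t,0)$, so by Lemma \ref{5} the polar multiplicity $m_1(f_t(\mathbb{C}^2),0)$ jumps from $7$ to $6$ and Whitney equisingularity fails; in Example \ref{osegundocontraex} it is instead $m_0(f_t(D(f_t)))$ that jumps, from $23$ to $22$.

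One point where your diagnosis is too narrow: you expect the slack to be confined to corank $\ge 2$, but Example \ref{oterceirocontraex} is a corank $1$ counterexample, $f_t(x,y)=(x,y^4,x^5y-5x^3y^3+4xy^5+y^6+ty^7)$, in which $\mu(D(f_t))=196$, $C(f_t)=15$ and $T(f_t)=20$ are all constant (so the first reduction step you propose, constancy of $C$ and $T$ from $\mu$-constancy, does go through --- it is the corollary following Theorem \ref{40}), yet $m_0(f_t(D(f_t)))$ drops from $9$ to $8$ and correspondingly $J(f_t)$ drops from $39$ to $38$. The mechanism is precisely the ``identification pattern'' you flag as uncontrolled: a fold component of $D(f)$ whose image has multiplicity $2$ deforms so that every component of $f_t(D(f_t))$ has multiplicity $1$ for $t\neq 0$. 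The only positive statement the paper salvages is Lemma \ref{lemmaauxiliarcap4} and the theorem of Section \ref{section7}: when every irreducible component of $f(D(f))$ is smooth (e.g.\ $f$ homogeneous of corank $1$ with $\gcd(n,m)\neq 2$), upper semicontinuity of multiplicity does force Whitney equisingularity from topological triviality.
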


Unsuccessful attempts to prove this conjecture were presented in \cite{ruas} and \cite{ruas3}. This problem remained unsolved for almost $25$ years. In this work, we provide a complete answer to this question, given counterexamples which show different ways in which the conjecture may fail.

\section{Double point spaces}\label{section2}

$ \ \ \ \ $ Consider a finite and holomorphic map $f:U\rightarrow \mathbb{C}^3$, where $U$ is a small enough neighbourhood of $0$ in $\mathbb{C}^2$. Throughout the paper, $(x,y)$ and $(X,Y,Z)$ are used to denote systems of coordinates in $\mathbb{C}^2$ and $\mathbb{C}^3$, respectively. The double point curve of $f$, denoted by $D(f)$, is defined as the following set

\begin{center}
$D(f):=\lbrace \ (x,y) \in U \ : \ f^{-1}(f(x,y)) \neq \lbrace (x,y) \rbrace \ \rbrace \ \cup \ \Sigma(f)$,
\end{center}

\noindent where $\Sigma(f)$ is the singular set of $f$. If $f$ is finite and generically $1$ $-$ $1$, then $D(f)$ is a closed analytic set of dimension $1$. We also consider the lifting of the double point curve $D^2(f) \subset U \times U$ given by the pairs $((x,y),(x^{'},y^{'}))$ such that either $f(x,y)=f(x^{'},y^{'})$ with $(x,y) \neq (x^{'},y^{'})$ or $(x,y)=(x^{'},y^{'})$ and $(x,y) \in \Sigma(f)$ (see \cite{marar1} and \cite{guilhermo}).\\

We need to choose convenient analytic structures for the double point curve $D(f)$ and the lifting of the double point curve $D^2(f)$. To do this, we follow the construction of \cite{marar1} which is also valid for holomorphic maps from $\mathbb{C}^n$ to $\mathbb{C}^p$, with $n<p$.

Let us denote the diagonals in $\mathbb{C}^2 \times \mathbb{C}^2$ and $\mathbb{C}^3 \times \mathbb{C}^3$ by $\Delta_2$ and $\Delta_3$, respectively, and denote the sheaves of ideals defining them by $\mathcal{I}_2$ and $\mathcal{I}_3$, respectively. Locally, $\mathcal{I}_{2}=\langle x-x^{'},y-y^{'} \rangle$ and $\mathcal{I}_{3}=\langle X-X^{'},Y-Y^{'}, Z-Z^{'} \rangle$. Since the pull-back $(f \times f)^{\ast}\mathcal{I}_{3}$ is contained in $\mathcal{I}_{2}$ and $U$ is small enough, then there exist functions $\alpha_{ij}\in \mathcal{O}_{U^{2}}$ well defined in all $U \times U$, such that

\begin{center}
$f_{i}(x,y)-f_{i}(x^{'},y^{'})= \alpha_{i1}(x,y,x^{'},y^{'})(x-x^{'})+ \alpha_{i2}(x,y,x^{'},y^{'})(y-y^{'})$, for $i=1,2,3.$
\end{center}

If $f(x,y)=f(x^{'},y^{'})$ and $(x,y) \neq (x^{'},y^{'})$, then every $2 \times 2$ minor of the matrix $\alpha=(\alpha_{ij})$ must vanish at $(x,y,x^{'},y^{'})$. We denote by $\mathcal{R}(\alpha)$ the ideal in $\mathcal{O}_{\mathbb{C}^{4}}$ generated by the $2\times 2$ minors of $\alpha$. Then we define the \textit{lifting of the double point curve $D(f)$} (with an analytic structure) as the complex space

\begin{center}
$ D^{2}(f)=V((f\times f)^{\ast}\mathcal{I}_{3}+\mathcal{R}(\alpha))$.
\end{center}

And we call \textit{double point ideal} the ideal $\mathcal{I}^{2}(f)=\langle(f\times f)^{\ast}\mathcal{I}_{3}+\mathcal{R}(\alpha)\rangle$. Although the ideal $\mathcal{R}(\alpha)$ depends on the choice of the coordinate functions of $f$, in \cite{mond1} it is proved that $\mathcal{I}^{2}(f)$ does not, and so $D^{2}(f)$ is well defined. It is easy to see that the points in the underlying set of $D^{2}(f)$ are exactly the ones in $U \times U$ of type $(x,y,x^{'},y^{'})$ with $(x,y) \neq (x^{'},y^{'})$, $f(x,y)=f(x^{'},y^{'})$ and of type $(x,y,x,y)$ such that $(x,y)$ is a singular point of $f$.

Let $f:(\mathbb{C}^2,0)\rightarrow(\mathbb{C}^{3},0)$ be a finite map germ and take a representative of $f$ defined on a small enough open neighborhood of the origin. Denote by $I_{3}$ and $R(\alpha)$ the stalks at $0$ of $\mathcal{I}_{3}$ and $\mathcal{R}(\alpha)$. We define the \textit{lifting of the double point space of the map germ $f$} as the complex space germ $D^{2}(f)=V((f \times f)^{\ast}I_{3}+R(\alpha))$.

\begin{remark}
Following Mond and Pellikan \rm\cite{mond2}, \textit{given a finite morphism of complex spaces $f:X\rightarrow Y$ then the push-forward $f_{\ast}\mathcal{O}_{X}$ is a coherent sheaf of $\mathcal{O}_{Y}-$modules and $\mathcal{F}_{k}(f_{\ast}\mathcal{O}_{X})$ denotes the \textit{k}th Fitting ideal sheaf. Notice that the support of $\mathcal{F}_{0}(f_{\ast}\mathcal{O}_{X})$ is just the image $f(X)$. Analogously, if $f:(X,x)\rightarrow(Y,y)$ is a finite map germ then we denote by $F_{k}(f_{\ast}\mathcal{O}_{X})$ the \textit{k}th Fitting ideal of $\mathcal{O}_{X,x}$ as $\mathcal{O}_{Y,y}-$module.}
\end{remark}

We now describe an appropriate analytic structure to $D(f)$ and one more space that is important to study the topology of $f(\mathbb{C}^{2})$, namely, the double point curve in the target, denoted by $f(D(f))$. 

\begin{definition}
\noindent (a) Consider $f:U\rightarrow \mathbb{C}^3$ as above, and let $p:D^{2}(f) \subset U \times U \rightarrow U$ be the restriction to $D^{2}(f)$ of the projection of $U \times U$ on the first factor. The \textit{double point curve} (with an analytic structure) is the complex curve

\begin{center}
$D(f)=V(\mathcal{F}_{0}(p_{\ast}\mathcal{O}_{D^2(f)}))$.
\end{center}

\noindent Set theoretically we have the equality $D(f)=p(D^{2}(f))$.\\

\noindent (b) The \textit{double point space in the target} is the complex curve $f(D(f))=V(\mathcal{F}_{1}(f_{\ast}\mathcal{O}_2))$. Notice that, the underlying set germ of $f(D(f))$ is the image of $D(f)$ by $f$.\\ 

\noindent (c) Given a finite map germ $f:(\mathbb{C}^{2},0)\rightarrow (\mathbb{C}^3,0)$, \textit{the germ of the double point curve} (with an analytic structure) is the germ of complex curve $D(f)=V(F_{0}(p_{\ast}\mathcal{O}_{D^2(f)}))$. \textit{The germ of the double point curve in the image} is the germ of the complex curve $f(D(f))=V(F_{1}(f_{\ast}\mathcal{O}_2))$, \rm(\textit{for details, see} \rm\cite{guilhermo} \textit{and} \rm\cite{mond2}).

\end{definition}

In \cite{marar1}, Marar and Mond showed that if $f:(\mathbb{C}^2,0)\rightarrow (\mathbb{C}^3,0)$ has corank $1$, then $f$ is finitely determined if and only if $\mu(D(f))$ is finite. The Milnor number of $D(f)$ is called \textit{Mond number}. In \cite{guilhermo}, Marar, Nuño-Ballesteros and Peñafort-Sanchis extended this criterion of finite determinacy for the corank $2$ case.

\begin{theorem}\rm(\cite{guilhermo} Theorem $2.4$ and Corollary $3.5$)\label{criterio} \textit{
Let $f:(\mathbb{C}^2,0)\rightarrow(\mathbb{C}^{3},0)$ be a finite and generically $1$ $-$ $1$ map germ. Then $D(f)$ is reduced if and only if $D^{2}(f)$ is reduced and the projection $p: D^{2}(f)\rightarrow (\mathbb{C}^{2},0)$ is generically $1$ $-$ $1$. Hence, $f$ is finitely determined if and only if its Mond number $\mu(D(f))$ is finite.}
\end{theorem}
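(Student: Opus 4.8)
The plan is to prove the scheme-theoretic equivalence first, and then to deduce the finite-determinacy criterion from it together with the Mather--Gaffney theorem. The starting point is the determinantal nature of $D^{2}(f)$: since $f$ is finite and generically $1$-$1$, one first checks that the double point ideal $(f\times f)^{\ast}I_{3}+R(\alpha)$ cuts out a complex space of the expected codimension in $\mathbb{C}^{4}$, so that $D^{2}(f)$ is Cohen--Macaulay and purely $1$-dimensional. This is the input that replaces the explicit corank $1$ parametrizations of \cite{marar1} and is exactly what makes the argument work in arbitrary corank. Because $p:D^{2}(f)\to(\mathbb{C}^{2},0)$ is then a finite map from a Cohen--Macaulay curve to a smooth surface, the push-forward $p_{\ast}\mathcal{O}_{D^{2}(f)}$ is a Cohen--Macaulay $\mathcal{O}_{\mathbb{C}^{2},0}$-module of dimension $1$; by Auslander--Buchsbaum it has projective dimension $1$, hence a \emph{square} presentation matrix $\phi$, and therefore $F_{0}(p_{\ast}\mathcal{O}_{D^{2}(f)})=(\det\phi)$ is principal, consistent with $D(f)$ being a plane curve.

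The core of the first equivalence is the multiplicity formula for this principal Fitting ideal, in the spirit of Mond--Pellikaan \cite{mond2}: along each irreducible branch $D_{i}$ of the reduced image, the order of vanishing of $\det\phi$ equals the length of the localization of $p_{\ast}\mathcal{O}_{D^{2}(f)}$ at the generic point $\eta_{i}$ of $D_{i}$, viewed over the discrete valuation ring $(\mathcal{O}_{\mathbb{C}^{2}})_{\eta_{i}}$; concretely this is the total length contributed by the points $q\in p^{-1}(\eta_{i})$. Thus $D(f)=V(\det\phi)$ is reduced if and only if this length is $1$ along every branch. That length is $1$ precisely when there is a single point of $D^{2}(f)$ over $\eta_{i}$, i.e. $p$ is generically $1$-$1$, and the local ring of $D^{2}(f)$ there is reduced, i.e. $D^{2}(f)$ is generically reduced. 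Since $D^{2}(f)$ is Cohen--Macaulay it has no embedded components, so generically reduced is equivalent to reduced; this yields both implications of the equivalence simultaneously.

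For the \emph{Hence} part I would combine this with the Mather--Gaffney criterion: $f$ is finitely determined if and only if it has isolated instability, that is, some representative is stable at every point of a punctured neighbourhood of $0$. First note that $\mu(D(f))$ is finite exactly when $D(f)$ is a reduced curve whose only singularity is at $0$. If $\mu(D(f))<\infty$, then $D(f)$ is reduced, so by the equivalence $D^{2}(f)$ is reduced and $p$ is generically $1$-$1$; away from $0$ the only local models compatible with a reduced, generically $1$-$1$ double point space are the stable multi-germs of type $(2,3)$, namely cross-caps and transverse double and triple points, whence $f$ is stable off $0$ and is finitely determined. Conversely, if $f$ is finitely determined, then off $0$ it realizes only these stable models, for which $D^{2}(f)$ is a reduced curve with isolated singularity and $p$ is generically $1$-$1$; the equivalence then gives $D(f)$ reduced with isolated singularity, so $\mu(D(f))<\infty$.

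The step I expect to be the main obstacle is the structural claim that $D^{2}(f)$ is Cohen--Macaulay of dimension $1$ for arbitrary corank, since one can no longer rely on the explicit equations available in corank $1$: controlling the codimension of the ideal $\mathcal{R}(\alpha)$ and the interplay between the several branches of $D^{2}(f)$ meeting over a triple point is the delicate part. It is exactly there that both the identity \emph{generically reduced $=$ reduced} and the precise fibre-length count underlying the multiplicity formula must be verified with care, and where the passage from the corank $1$ result of \cite{marar1} to the general statement genuinely requires the Fitting-ideal machinery rather than normal forms.
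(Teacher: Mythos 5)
The paper itself does not prove this statement: it is quoted directly from Marar, Nu\~no-Ballesteros and Pe\~nafort-Sanchis \cite{guilhermo} (Theorem 2.4 and Corollary 3.5), so there is no in-paper argument to compare yours against; I can only judge your outline against the strategy of that reference, which it reconstructs faithfully. The chain you describe --- Cohen--Macaulayness and purity of $D^{2}(f)$, depth $1$ of $p_{\ast}\mathcal{O}_{D^{2}(f)}$ plus Auslander--Buchsbaum giving a square presentation matrix so that $F_{0}$ is principal, the identification of $\mathrm{ord}_{D_i}(\det\phi)$ with the total length of the fibre over the generic point of each branch, CM $\Rightarrow$ no embedded components so that generically reduced equals reduced, and finally Mather--Gaffney for the ``hence'' part --- is exactly the architecture of the cited proof. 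The two points you flag are genuinely where the content lies, and in their present form they are asserted rather than proved: (i) that $\mathcal{I}^{2}(f)=(f\times f)^{\ast}I_{3}+R(\alpha)$ attains the expected codimension $3$ in $\mathbb{C}^{4}$ and defines a Cohen--Macaulay space (in \cite{guilhermo} this is established from the determinantal structure of the ideal, and it is the step that makes the argument corank-independent); and (ii) that at a point $z\neq 0$ a reduced, smooth $D(f)$ together with $D^{2}(f)$ reduced and $p$ generically $1$-$1$ forces the multigerm of $f$ over $f(z)$ to be stable (immersion, cross-cap, transverse double or triple point) --- a nonstable mono-germ such as an $S_{k}$ point, $k\geq 1$, still has reduced $D(f)$, so it is the smoothness of $D(f)$ off the origin coming from $\mu(D(f))<\infty$, not reducedness alone, that rules it out, and this recognition-of-stability step needs an actual argument. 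With those two lemmas supplied (or cited), your proof is correct and is essentially the proof of \cite{guilhermo}.
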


\begin{definition} Let $f:(\mathbb{C}^2,0)\rightarrow (\mathbb{C}^3,0)$ be a finitely determined map germ. Take a representative $f:U\rightarrow V$ of $f$, where $U$ and $V$ are neighbourhoods of $0$ in $\mathbb{C}^2$ and $\mathbb{C}^3$ and consider an irreducible component $D(f)^j$ of $D(f)$.\\

\noindent (a) If the restriction ${f_|}_{D(f)^j}:D(f)^j\rightarrow \mathbb{C}^3$ is generically $1$ $-$ $1$, we say that $D(f)^j$ is an \textit{identification component of} $D(f)$. In this case, there exists an irreducible component $D(f)^i$ of $D(f)$, with $i \neq j$, such that $f(D(f)^j)=f(D(f)^i)$. We say that $D(f)^i$ is the \textit{correspondent identification component to} $D(f)^j$ or that the pair $(D(f)^j, D(f)^i)$ is a \textit{pair of identification components of} $D(f)$.  \\

\noindent (b) If the restriction ${f_|}_{D(f)^j}:D(f)^j\rightarrow \mathbb{C}^3$ is generically $2$ $-$ $1$, we say that $D(f)^j$ is a \textit{fold component of} $D(f)$.
\end{definition}

\begin{example}
\textit{Let $f(x,y)=(x,y^2,xy^3-x^3y)$ be the singularity $C_3$ of Mond's list} \rm(\cite{mond1}). \textit{In this case, $D(f)=V(xy^2-x^3)$. Then $D(f)$ has three irreducible components given by}

\begin{center}
$D(f)^1=V(x), \ \ \ $ $D(f)^2=V(x+y) \ \ $ and $ \ \ D(f)^3=V(x-y)$. 
\end{center}

\textit{Notice that $D(f)^1$ is a fold component, while $(D(f)^2$, $D(f)^3)$ is a pair of identification components. Also, we have that $f(D(f)^1)=V(Z,X)$ and $f(D(f)^2)=f(D(f)^3)=V(Z,Y-X^2)$ (see Figure \rm\ref{figura38}).}\\

\begin{figure}[h]
\centering
\includegraphics[scale=0.3]{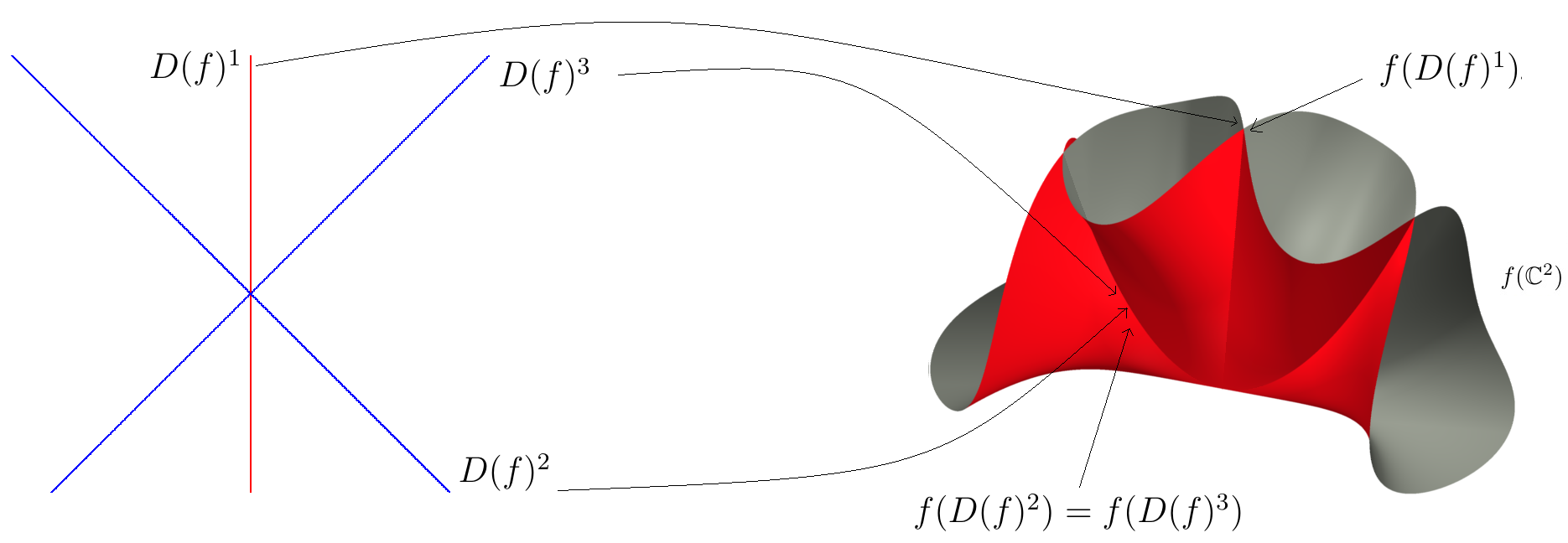}  
\caption{Identification and fold components of $D(f)$ (real points)}\label{figura38}
\end{figure}

\textit{Let $f:(\mathbb{C}^2,0)\rightarrow(\mathbb{C}^3,0)$ be a corank $1$ homogeneous map germ and write $f$ in the form $f(x,y)=(x,p(x,y),q(x,y))$. We will see in Section \ref{section6} that if the degrees of $p$ and $q$ are even, then $D(f)$ is defined by a homogeneous function of odd degree, then $D(f)$ has exactly one fold component.} 

\end{example}


We will see in the next section that the Milnor number of $D(f_{t})$ is an invariant that controls the topological triviality of the family of surfaces $f_{t}(\mathbb{C}^{2})$.

\section{Topological triviality of families of map germs}\label{section3}

$ \ \ \ \ $ Following \rm\cite{ruas}, we describe the necessary and sufficient conditions for a family $f_{t}$ to be topologically trivial. Firstly, we will define two more invariants. 

Let $f:(\mathbb{C}^2,0)\rightarrow (\mathbb{C}^3,0)$ be a finitely determined map germ and $F:(\mathbb{C}^{2} \times \mathbb{C},0)\rightarrow (\mathbb{C}^3 \times \mathbb{C},0)$ a $1$-parameter unfolding of $f$ defined by $F(x,t)=(f_{t}(x),t)$. We assume that the origin is preserved, that is, $f_{t}(0)=0$ for all $t$.

We say that a $1$-parameter unfolding $F$ is a stabilization of $f$ if there is a representative $F:U \times T\rightarrow \mathbb{C}^{3} \times T$, where $T$ and $U$ are open neighborhoods of $0$ in $\mathbb{C}$ and $\mathbb{C}^2$ respectively, such that $f_{t}:U\rightarrow \mathbb{C}^3$ is stable for all $t \in T \setminus \lbrace 0 \rbrace$.

By Mather-Gaffney's criterion \cite{mather}, there is a representative $f:U \subset \mathbb{C}^{2}\rightarrow V \subset \mathbb{C}^{3}$ such that $f^{-1}(0)=\lbrace 0 \rbrace$ and $f$ is stable on $U \setminus \lbrace 0 \rbrace$, where $V$ is an open neighborhoods of $0$ in $\mathbb{C}^3$. By shrinking $U$ if necessary, we can assume that there are no cross-caps or triple points in $U \setminus \lbrace 0 \rbrace$. Then, since we are in the nice dimensions, we can take a stabilization of $f$, $F:U \times D \rightarrow \mathbb{C}^4$, $F(z,s)=(f_{s}(z),s)$ where $D$ is a neighbourhood of $0$ in $\mathbb{C}$. We are ready to give the next definition.

\begin{definition}\label{def51}
We define $C(f) = \sharp  \lbrace$ cross-caps of $f_{s} \rbrace$ and $T(f)=\sharp \lbrace$ triple points of $f_{s} \rbrace$, for $s \neq 0$. These are analytic invariants of $f$ and they can be computed as follows \rm(\cite{mond1}):

\begin{center}
$C(f)=dim_{\mathbb{C}}\dfrac{\mathcal{O}_{2}}{Rf}$, $\ \ \ \ \ \ \ T(f)=dim_{\mathbb{C}}\dfrac{\mathcal{O}_{3}}{F_{2}(f_{\ast}\mathcal{O}_2)}$
\end{center}

\noindent where $Rf$ is the ideal generated by the maximal minors of the jacobian matrix of $f$.

\end{definition}

\begin{definition}
Let $F:(\mathbb{C}^2 \times \mathbb{C},0)\rightarrow (\mathbb{C}^3 \times \mathbb{C},0)$ be a $1$-parameter unfolding of a finitely determined map germ $f:(\mathbb{C}^2,0)\rightarrow(\mathbb{C}^3,0)$. We say that $F$ is \textit{$\mu$-constant} if $\mu(D(f_{t}))$ is constant for all $t \in T$, where $T$ is a neighborhood of $0$ in $\mathbb{C}$.
\end{definition}

\begin{definition} Let $F:(\mathbb{C}^2 \times \mathbb{C},0)\rightarrow (\mathbb{C}^3 \times \mathbb{C},0)$ be a $1$-parameter unfolding of a finitely determined map germ $f:(\mathbb{C}^2,0)\rightarrow(\mathbb{C}^3,0)$. We say that $F$ is \textit{topologically trivial} if there are germs of homeomorphisms:

\begin{center}
$\Phi :(\mathbb{C}^2 \times \mathbb{C},0)\rightarrow (\mathbb{C}^2 \times \mathbb{C},0), \ \ \ \ \ \ \ \  \ \ \ \Phi(x,t)=(\phi_{t}(x),t), \ \phi_{0}(x)=x, \ \phi_{t}(0)=0 $ 
\end{center}

\begin{center}
$\Psi:(\mathbb{C}^3 \times \mathbb{C},0)\rightarrow (\mathbb{C}^3 \times \mathbb{C},0) \ \ \ \ \ \ \ \  \ \ \ \Psi(x,t)=(\psi_{t}(x),t), \ \psi_{0}(x)=x, \ \psi_{t}(0)=0 $
\end{center}

\noindent such that $I=\Psi^{-1} \circ F \circ \Phi$, where $I(x,t)=(f(x),t)$ is the trivial unfolding of $f$.

\end{definition}

The following theorem characterizes topological triviality.

\begin{theorem}\rm{(\cite{ruas} Theorem $6.2$ and \cite{bobadilha} Corollary $32$)}\label{18}
\textit{Let $F:(\mathbb{C}^2 \times \mathbb{C},0)\rightarrow (\mathbb{C}^3 \times \mathbb{C},0)$ be a $1$-parameter unfolding of a finitely determined map germ $f:(\mathbb{C}^2,0)\rightarrow(\mathbb{C}^3,0)$. The following statements are equivalent:}

\begin{flushleft}
\textit{(1) $F$ is topologically trivial.}\\

\textit{(2) $F$ is $\mu$-constant.}
\end{flushleft}

\end{theorem}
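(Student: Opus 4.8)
The plan is to prove the two implications separately, treating $(1)\Rightarrow(2)$ as the soft direction and $(2)\Rightarrow(1)$ as the one that carries all the content.

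For $(1)\Rightarrow(2)$ I would argue that the double point curve is topologically intrinsic to the map. If $F$ is topologically trivial via homeomorphisms $\Phi(x,t)=(\phi_t(x),t)$ and $\Psi(x,t)=(\psi_t(x),t)$ with $\psi_t\circ f=f_t\circ\phi_t$, then a point $x$ fails to be a point of local injectivity of $f$ (respectively lies in $\Sigma(f)$) exactly when $\phi_t(x)$ has the corresponding property for $f_t$. Since the underlying set of $D(f)$ is precisely the non-injectivity locus together with $\Sigma(f)$, each $\phi_t$ restricts to a homeomorphism of germs $D(f)\to D(f_t)$, so that $\{D(f_t)\}$ is an embedded topologically trivial family of reduced plane curve germs. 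Because the Milnor number of a reduced plane curve germ is determined by its embedded topological type (equivalently by $\mu=2\delta-r+1$, with $\delta$ and the number of branches $r$ topological invariants), I would conclude that $\mu(D(f_t))$ is constant.

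For $(2)\Rightarrow(1)$ the plan has three stages. First, $\mu(D(f_t))$ constant forces the family $\{D(f_t)\}$ of reduced plane curves to be equisingular, hence topologically trivial in the strong embedded sense; this is the curve case of the L\^e--Ramanujam theorem together with Zariski's equisingularity theory for plane curve families, and it yields a controlled trivialization of the source along the stratum $D(f_t)$. Second, I would show that the zero-dimensional stable invariants do not move: the cross-cap number $C(f_t)$ and the triple-point number $T(f_t)$, being upper semicontinuous and expressible through the multiple point spaces $D^{2}(f_t)$ and $D^{3}(f_t)$ attached to the double point curve, stay constant under the $\mu$-constant hypothesis by a conservation-of-number argument. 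Third, with $D(f_t)$ equisingular and $C,T$ constant, I would construct the trivializing homeomorphisms $\Phi,\Psi$ by integrating controlled stratified vector fields in source and target in the Thom--Mather style, requiring the source field to preserve $D(f_t)$ and to lift consistently to $D^{2}(f_t)$ (so that the pairs of points identified by $f_t$ are respected), and the target field to preserve $f_t(\mathbb{C}^2)$ and $f_t(D(f_t))$.

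The main obstacle is the reverse direction, and within it the passage from the equisingularity of the plane curve $D(f_t)$ to the topological triviality of the whole map and its image. Controlling the one-dimensional invariant $\mu(D(f_t))$ does not by itself obviously control the topology of the surface $f_t(\mathbb{C}^2)\subset\mathbb{C}^3$, so the crux is to prove that no topology is created or destroyed in the target: this is exactly where the constancy of $C(f_t)$ and $T(f_t)$ must be extracted from $\mu(D(f_t))$ constant, and where the stratified vector field must be lifted through $f_t$ without breaking the double point correspondence. I expect this lifting step, rather than any single computation, to be the heart of the argument, building on the constructions of \cite{ruas} and \cite{bobadilha}.
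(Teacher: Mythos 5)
First, a point of reference: the paper does not prove Theorem \ref{18} at all --- it is quoted from \cite{ruas} and \cite{bobadilha} --- so your attempt can only be measured against those sources and against what the surrounding text makes available. Your direction $(1)\Rightarrow(2)$ is essentially correct and is the standard argument: after shrinking the representative one has $\Sigma(f_t)\subseteq\{0\}$, so the underlying set of $D(f_t)$ is the closure of the non-injectivity locus and is therefore carried onto $D(f)$ by the trivializing homeomorphisms; the curves are reduced because the $f_t$ are finitely determined (Theorem \ref{criterio}); and the Milnor number of a reduced plane curve germ is an invariant of its embedded topological type. Your second stage of the converse, extracting the constancy of $C(f_t)$ and $T(f_t)$ from $\mu$-constancy, is also fine --- it is exactly the unlabelled corollary following Theorem \ref{40}, obtained from the Marar--Mond formulas and upper semicontinuity.

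The genuine gap is in the third stage of $(2)\Rightarrow(1)$, and the tool you reach for there is the wrong one. Integrating controlled stratified vector fields in the Thom--Mather style presupposes regularity conditions (Whitney $(a)$, $(b)$ or a substitute) on the natural stratification of $F$ in source and target, together with a compatible control tube system; but the entire point of this paper is that $\mu$-constant families need not be Whitney equisingular --- Examples \ref{oprimeiroexemplo}, \ref{osegundocontraex} and \ref{oterceirocontraex} are topologically trivial yet fail Whitney equisingularity --- so no such controlled lift can be assumed to exist, and a proof of $(2)\Rightarrow(1)$ built on it would prove too much. The arguments in the cited sources proceed differently: the image $f_t(\mathbb{C}^2)$ is the quotient of $(\mathbb{C}^2,0)$ by the identification encoded in $D^2(f_t)$, so one first uses the fact that a $\mu$-constant family of reduced plane curves is equisingular to trivialize the pairs $(\mathbb{C}^2,D(f_t))$ topologically, and then --- this is the real content --- one makes that trivialization equivariant with respect to the involution on (the normalization of) $D^2(f_t)$, i.e., compatible with both projections $D^2(f_t)\to\mathbb{C}^2$ and with the fold/identification structure of the components, so that it descends to a homeomorphism of the images; \cite{bobadilha} packages this as ``equisingularity at the normalisation''. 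You correctly name this compatibility as the crux, but you leave it as an expectation rather than an argument, and the vector-field framework you propose supplies no mechanism for producing it.
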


The following theorem has been shown in \cite{marar1} in the case where $f$ has corank $1$ and in \cite{guilhermo} in the case where $f$ has corank $2$. This result is very useful for the computations of the numerical invariants.

\begin{theorem}\rm{(\cite{marar1} Theorem $3.4$ and \cite{guilhermo} Theorem $4.3$)}\label{40}
\textit{Let} $f:(\mathbb{C}^2,0)\rightarrow (\mathbb{C}^3,0)$ \textit{be finitely determined. Then}

\begin{flushleft}
(a) $\mu(D(f))=\mu(D^{2}(f))+6T(f)$\\
(b) $\mu(D^2(f))=2\mu(D^{2}(f)/S_{2})+C(f)-1$\\
(c) $\mu(D(f))=2\mu(f(D(f))+C(f)-2T(f)-1$.
\end{flushleft}

\noindent \textit{where $D^{2}(f)/S_{2}$ is the quotient complex space of $D^{2}(f)$ by the group $S_{2}=\mathbb{Z}/2$.}

\end{theorem}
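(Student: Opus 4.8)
The plan is to reduce all three identities to Euler-characteristic bookkeeping on the generic fibre of a stabilization, using the elementary fact that a reduced curve singularity $C$ has connected Milnor fibre $C_s$ with $\mu(C)=1-\chi(C_s)$. Take a stabilization $F=(f_s,s)$ of $f$ (it exists since $(2,3)$ lies in the nice dimensions); it induces one-parameter deformations of $D(f)$, $D^2(f)$, $f(D(f))$ and $D^2(f)/S_2$ with generic fibres $D(f_s)$, $D^2(f_s)$, $f_s(D(f_s))$ and $D^2(f_s)/S_2$. As $f_s$ is stable, its only singularities are $C(f)$ cross-caps and $T(f)$ triple points, and the multiple-point spaces of a stable map are smooth; I will compute $\chi$ of each generic fibre, pass to an honest smoothing whenever the fibre is still singular, and read off $\mu$.

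The computation rests on the local models at the stable singularities. At a cross-cap $(x,y)\mapsto(x,y^{2},xy)$ the curves $D(f_s)$ and $D^2(f_s)$ are smooth, $p$ is an isomorphism, the involution $\iota(x,x')=(x',x)$ has one isolated (diagonal) fixed point, and $f_s\colon D(f_s)\to f_s(D(f_s))$ is a simple branch point of a double cover. At a triple point $D(f_s)$ has three nodes, over each of which $p$ has two preimages, so $p\colon D^2(f_s)\to D(f_s)$ is precisely the normalization at the $3T(f)$ nodes; the image $f_s(D(f_s))$ acquires an ordinary space triple point, i.e. three concurrent lines in general position in $\C^{3}$, for which $\delta=2$, $r=3$ and $\mu=2$. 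Thus $D^2(f_s)$ and $D^2(f_s)/S_2$ are smooth and equal the Milnor fibres of $D^2(f)$ and $D^2(f)/S_2$, while $D(f_s)$ and $f_s(D(f_s))$ must still be smoothed at their nodes, respectively triple points.

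Parts (a) and (b) follow at once. Normalizing a node raises $\chi$ by $1$ and smoothing a node lowers it by $1$, so $\chi(D^2(f_s))=\chi(D(f_s))+3T(f)$ while the Milnor fibre $\widetilde D_s$ of $D(f)$ has $\chi(\widetilde D_s)=\chi(D(f_s))-3T(f)$; combining with $1-\mu(D^2(f))=\chi(D^2(f_s))$ and $1-\mu(D(f))=\chi(\widetilde D_s)$ gives (a). Since $\iota$ acts on the smooth curve $D^2(f_s)$ with exactly $C(f)$ fixed points, Riemann--Hurwitz for $D^2(f_s)\to D^2(f_s)/\iota$ gives $\chi(D^2(f_s))=2\chi(D^2(f_s)/\iota)-C(f)$, and substituting $1-\mu(D^2(f))$ and $1-\mu(D^2(f)/S_2)$ yields (b).

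For (c) I would analyse the generically $2$-to-$1$ map $\phi=f_s\colon D(f_s)\to f_s(D(f_s))$. Off neighbourhoods of the cross-caps and triple points $\phi$ is an unramified double cover, and since all the separating circles contribute $0$ to $\chi$ this gives
\[
\chi\bigl(f_s(D(f_s))\bigr)=\tfrac12\chi\bigl(D(f_s)\bigr)+\tfrac12 C(f)-\tfrac12 T(f).
\]
Smoothing the $T(f)$ space triple points (each Milnor fibre has $\chi=1-\mu=-1$) lowers $\chi$ by $2$ apiece, producing the Milnor fibre of $f(D(f))$, so $1-\mu(f(D(f)))=\chi(f_s(D(f_s)))-2T(f)$; inserting $\chi(D(f_s))=1-\mu(D(f))+3T(f)$ from (a) and clearing denominators gives (c). The step on which everything turns — and the main obstacle — is the precise local analytic geometry of the four spaces at the stable singularities, above all that the triple point of $f(D(f))$ is a \emph{space} curve of three general lines with $\delta=2$, not the planar ordinary triple point with $\delta=3$; this is exactly what yields the coefficient $-2T(f)$ rather than $-6T(f)$. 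One must moreover verify that these fibres, after the extra smoothing, are genuine smoothings of the central curves, i.e. that the Fitting-ideal structures on $D(f)$, $D^2(f)$ and $f(D(f))$ match the reduced pictures used above — which is where Theorem \ref{criterio} (reducedness of $D(f)$, $D^2(f)$ and generic injectivity of $p$) enters.
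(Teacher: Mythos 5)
This theorem is quoted by the paper from \cite{marar1} and \cite{guilhermo} without proof, so there is no internal argument to compare against; your proposal is, in substance, the topological proof of the cited sources (in particular \cite{guilhermo}, Theorem 4.3): stabilize, use that $p$ normalizes the $3T(f)$ nodes of $D(f_s)$ for (a), Riemann--Hurwitz for the involution with $C(f)$ fixed points for (b), and the generically two-to-one map onto $f_s(D(f_s))$ together with the $\delta=2$ space triple points for (c). I checked the Euler-characteristic bookkeeping in all three parts and it is correct, including the key point that the ordinary \emph{space} triple point contributes $\mu=2$ (so smoothing it drops $\chi$ by $2$), which is what produces $-2T(f)$ in (c). The caveats you flag at the end are the right ones: one needs the Fitting/ideal-theoretic structures on $D^2(f)$, $D(f)$ and $f(D(f))$ to give flat families with reduced fibres under the stabilization (this is where \cite{mond2} and Theorem \ref{criterio} enter), and rather than constructing honest smoothings of the nodal and triple-point fibres it is cleaner to invoke the Buchweitz--Greuel formula $\chi(C_s)=1-\mu(C_0)+\sum_{x}\mu(C_s,x)$ for a good representative of a flat deformation with reduced fibres, which yields the same numbers without any smoothability hypothesis.
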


The next corollary follows immediately by the upper semi-continuity of the invariants $D^{2}(f)$, $D^{2}(f)/S_{2}$, $C(f)$ and $T(f)$ and the formulas in Theorem \ref{40}.

\begin{corollary} Let $F:(\mathbb{C}^2 \times \mathbb{C},0)\rightarrow (\mathbb{C}^3 \times \mathbb{C},0)$ be a $1$-parameter unfolding of a finitely determined map germ $f:(\mathbb{C}^2,0)\rightarrow(\mathbb{C}^3,0)$. If $\mu(D(f_{t}))$ is constant, then $C(f_{t})$ and $T(f_{t})$ are constant.
\end{corollary}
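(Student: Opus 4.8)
The plan is to exploit the two identities of Theorem \ref{40}(a) and (b) together with the fact that each quantity appearing on their right-hand sides varies upper semi-continuously in the parameter $t$. The guiding principle is elementary: if a quantity that is known to be constant is expressed as a sum of non-negative terms, each of which can only jump up at the special fibre, then every one of those terms must in fact be constant.

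First I would apply formula (a), namely $\mu(D(f_t))=\mu(D^2(f_t))+6T(f_t)$. Comparing the special fibre $t=0$ with a generic nearby fibre $t\neq 0$, and using that $\mu(D(f_t))$ is constant by hypothesis, I obtain
\[
\left[\mu(D^2(f_0))-\mu(D^2(f_t))\right]+6\left[T(f_0)-T(f_t)\right]=0.
\]
By upper semi-continuity both bracketed differences are non-negative; since they sum to zero, each must vanish. Hence $\mu(D^2(f_t))$ and $T(f_t)$ are both constant.

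Next I would feed the constancy of $\mu(D^2(f_t))$ just obtained into formula (b), namely $\mu(D^2(f_t))=2\mu(D^2(f_t)/S_2)+C(f_t)-1$. The same two-fibre comparison yields
\[
2\left[\mu(D^2(f_0)/S_2)-\mu(D^2(f_t)/S_2)\right]+\left[C(f_0)-C(f_t)\right]=0,
\]
and once more upper semi-continuity forces each non-negative summand to vanish, so $C(f_t)$ is constant. This completes the argument, and formula (c) is not needed.

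There is no serious obstacle here: the only ingredients are the two identities of Theorem \ref{40}, which are already available, and the upper semi-continuity of $\mu(D^2(f_t))$, $\mu(D^2(f_t)/S_2)$, $C(f_t)$ and $T(f_t)$, which is granted in the statement of the corollary. The single point deserving a word of care is that the two-fibre comparison is carried out for $t$ in a punctured neighbourhood of $0$, where the spaces $D^2(f_t)$ and $D^2(f_t)/S_2$ carry their expected isolated-singularity structure so that the relevant Milnor numbers are finite; only then does the decomposition of a constant into non-negative, upward-jumping contributions make sense.
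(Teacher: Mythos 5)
Your argument is correct and is exactly the one the paper intends: the corollary is stated there with only the one-line justification that it ``follows immediately by the upper semi-continuity of the invariants $D^{2}(f)$, $D^{2}(f)/S_{2}$, $C(f)$ and $T(f)$ and the formulas in Theorem \ref{40}'', and your write-up simply makes that cascade (formula (a) forces $\mu(D^2(f_t))$ and $T(f_t)$ constant, then formula (b) forces $C(f_t)$ constant) explicit. No issues.
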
 
 
\begin{theorem}\label{mondformulas} (\rm{\cite{mond3}}) \textit{Let $f:(\mathbb{C}^2,0)\rightarrow (\mathbb{C}^3,0)$ be a quasi-homogeneous finitely determined map germ. Write $f(x,y)=(f_1,f_2,f_3)$ and denote by $d_{i}$ the degree of $f_i$. Let $\omega_{1},\omega_{2}$ be the weights of $x$ and $y$, respectively. If $\epsilon = d_{1}+d_{2}+d_{3}-\omega_{1}-w_{2}$ and $\delta=d_{1}d_{2}d_{3}/(\omega_{1}\omega_{2})$, then}

\begin{center}
$C(f)=\dfrac{1}{\omega_1\omega_2}((d_2-\omega_1)(d_3-\omega2)+(d_1-\omega_2)(d_3-\omega_2)+(d_1-\omega_1)(d_2-\omega_1))$,
\end{center}

\begin{center}
$T(f)=\dfrac{1}{6\omega_1\omega_2}(\delta-\epsilon)(\delta-2\epsilon)+\dfrac{C(f)}{3}, \ \ \ \ \ \ \ \ \ \ \mu(D(f))=\dfrac{1}{\omega_1\omega_2}(\delta-\epsilon-\omega_1)(\delta-\epsilon-\omega_2)$,
\end{center}


\end{theorem}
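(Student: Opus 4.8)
The plan is to use the weighted-homogeneity of $f$ systematically: each of the cross-cap locus, the double point curve $D(f)$ and its lift $D^2(f)$ then inherits a weighted-homogeneous structure, so that every invariant becomes either the length of an Artinian weighted-homogeneous algebra (for $C(f)$ and $T(f)$) or the Milnor number of a weighted-homogeneous singularity (for $\mu(D(f))$), and can be evaluated from the weights $\omega_i$ and degrees $d_i$ alone. I would treat the invariants in the order $C(f)$, $\mu(D(f))$, $T(f)$. For $C(f)$, the cross-cap locus is $V(Rf)$ with $Rf=I_2(Jf)$ the ideal of maximal minors of the $3\times2$ Jacobian of $f$. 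Finite determinacy forces $Rf$ to have codimension $2$ and finite colength, hence to be Cohen--Macaulay of the expected codimension, so the Hilbert--Burch theorem supplies a graded free resolution
\[
0\to\mathcal{O}_2^{\,2}\xrightarrow{\ Jf\ }\mathcal{O}_2^{\,3}\xrightarrow{(\Delta_1,\Delta_2,\Delta_3)}\mathcal{O}_2\to\mathcal{O}_2/Rf\to0.
\]
Each entry $\partial f_i/\partial x_j$ is weighted-homogeneous of degree $d_i-\omega_j$, so the minor $\Delta_k$ (delete row $k$) has degree $\epsilon-d_k$, and degree-consistency of the left-hand map forces the two syzygy generators to have degrees $\epsilon-\omega_1$ and $\epsilon-\omega_2$. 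Reading the Hilbert series off the resolution and evaluating it at $t=1$ (its numerator vanishes to order two there, so one expands to second order) gives $C(f)=\dfrac{(\epsilon-\omega_1)^2+(\epsilon-\omega_2)^2-\sum_k(\epsilon-d_k)^2}{2\omega_1\omega_2}$, and a short expansion, using $\sum_i d_i-\omega_1-\omega_2=\epsilon$, identifies this with the stated three-term expression.

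For $\mu(D(f))$ I would first show that the reduced defining function $\lambda\in\mathcal{O}_2$ of the plane curve $D(f)$ is weighted-homogeneous of degree $\delta-\epsilon$. In the corank-$1$ model $f=(x,p,q)$ this is transparent: $D^2(f)$ is cut out in coordinates $(x,y,y')$ by the divided differences $\tilde p=(p(x,y)-p(x,y'))/(y-y')$ and $\tilde q=(q(x,y)-q(x,y'))/(y-y')$, and $\lambda$ is, up to a unit, $\operatorname{Res}_{y'}(\tilde p,\tilde q)$, whose weighted degree one computes to be $(d_2-\omega_2)(d_3-\omega_2)/\omega_2=\delta-\epsilon$; the general case follows from the determinantal description of $F_0(p_\ast\mathcal{O}_{D^2(f)})$. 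Since $f$ is finitely determined, $D(f)$ is reduced with isolated singularity (Theorem~\ref{criterio}), so once $\deg\lambda=\delta-\epsilon$ is known, the standard formula for a weighted-homogeneous plane-curve singularity yields $\mu(D(f))=(\delta-\epsilon-\omega_1)(\delta-\epsilon-\omega_2)/(\omega_1\omega_2)$.

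For $T(f)$ I would use relation (a) of Theorem~\ref{40}, that is $T(f)=\tfrac16\big(\mu(D(f))-\mu(D^2(f))\big)$, after computing $\mu(D^2(f))$. Since $D^2(f)$ is a weighted-homogeneous complete-intersection curve (in the corank-$1$ model, $V(\tilde p,\tilde q)\subset\mathbb{C}^3$ with variable weights $(\omega_1,\omega_2,\omega_2)$ and equation degrees $(d_2-\omega_2,d_3-\omega_2)$), its Milnor number is accessible through the L\^{e}--Greuel formula, which reduces it to the length of one further weighted-homogeneous Artinian algebra, or directly through the Greuel--Hamm quasi-homogeneous formula. Substituting the resulting expression together with $\mu(D(f))$ into relation (a) and simplifying (writing $\delta-2\epsilon=(\delta-\epsilon)-\epsilon$ to organize the cancellations) should produce $T(f)=\dfrac{(\delta-\epsilon)(\delta-2\epsilon)}{6\omega_1\omega_2}+\dfrac{C(f)}{3}$.

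I expect the $T(f)$ step to be the main obstacle. Computing $\mu(D^2(f))$ cleanly requires a correct weighted-homogeneous presentation of the complete-intersection curve $D^2(f)$, and in the corank-$2$ case this curve is governed by the more intricate determinantal equations of \cite{guilhermo} rather than by two divided differences; consequently both the identification $\deg\lambda=\delta-\epsilon$ and the Milnor number of $D^2(f)$ must be justified using the corank-$2$ structure theory. Everything else reduces to Bézout and Hilbert-series bookkeeping with the weights and degrees.
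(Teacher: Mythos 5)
The paper does not prove this statement: it is quoted verbatim from Mond's article \cite{mond3} and used as a black box, so there is no internal proof to compare against. Judged on its own merits, your reconstruction is essentially Mond's argument and is sound. The Hilbert--Burch step for $C(f)$ is correct: the syzygy degrees $\epsilon-\omega_1,\epsilon-\omega_2$ and minor degrees $\epsilon-d_k$ are right, the numerator of the Hilbert series vanishes to order exactly two at $t=1$, and your expression $\bigl((\epsilon-\omega_1)^2+(\epsilon-\omega_2)^2-\sum_k(\epsilon-d_k)^2\bigr)/(2\omega_1\omega_2)$ does expand to the three-term formula in the statement (despite that formula's asymmetric appearance). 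The computation $\deg\lambda=\delta-\epsilon$ via the resultant of the divided differences, combined with reducedness of $D(f)$ (Theorem~\ref{criterio}) and the Milnor--Orlik formula for a weighted-homogeneous plane curve, gives $\mu(D(f))$ correctly. Where you diverge from \cite{mond3} is in $T(f)$: Mond computes $T(f)=\dim_{\mathbb C}\mathcal{O}_3/F_2(f_\ast\mathcal{O}_2)$ directly from a graded presentation matrix of $f_\ast\mathcal{O}_2$ (again by a Hilbert-series count), whereas you go through $\mu(D^2(f))$ and relation (a) of Theorem~\ref{40}; both routes work, but the direct Fitting-ideal count avoids having to invoke Greuel--Hamm for the space curve $D^2(f)$ and is less bookkeeping. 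The one caveat, which you correctly flag yourself, is that your explicit models for $D^2(f)$ and $\lambda$ (divided differences, resultant in $y'$) presuppose corank $1$; to cover the corank-$2$ germs allowed by the statement you must replace these by the determinantal description of $\mathcal{I}^2(f)$ and $F_0(p_\ast\mathcal{O}_{D^2(f)})$ from \cite{guilhermo}, checking that the relevant generators are still weighted-homogeneous of the degrees you need. With that substitution carried out, the proposal is a complete and correct proof.
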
 
 
\section{Whitney equisingularity of families of surfaces in $\mathbb{C}^3$}\label{section4}

$ \ \ \ \ $ Gaffney defined in \cite{gaffney} the excellent unfoldings. An excellent unfolding has a natural stratification whose strata in the complement of the parameter space $T$ are the stable types in source and target. For families $F$ as in previous section, the strata in the source are the following

\begin{center}
$\lbrace \mathbb{C}^2 \setminus D^2(F), \ D^2(F) \setminus T, \ T \rbrace$
\end{center}

In the target, the strata are:

\begin{center}
$\lbrace \mathbb{C}^3 \setminus F(\mathbb{C}^2 \times \mathbb{C}), \  F(\mathbb{C}^2 \times \mathbb{C}) \setminus \overline{F(D^{2}(F))}, \  F(D^2(F)) \setminus T, \ T \rbrace$.
\end{center}

Notice that $F$ preserves the stratification, that is, $F$ sends a stratum into a stratum.

\begin{definition}
An unfolding $F$ as above is \textit{Whitney equisingular} if the above stratifications in source and target are Whitney equisingular along $T$.
\end{definition}

In addition to Theorem \ref{13}, the following results are known:

\begin{theorem}\rm(\cite{ruas} Theorem $8.7$ and \cite{gaffney} Theorem $5.2$) \textit{Let $F$ be a $1$-parameter unfolding of a finitely determined map germ $f:(\mathbb{C}^2,0)\rightarrow (\mathbb{C}^3,0)$. If $F$ is $\mu$-constant, then $F$ is an excellent unfolding.}
\end{theorem}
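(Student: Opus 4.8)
\smallskip
\noindent\textbf{Proof strategy.} The plan is to deduce excellence from the constancy of the whole collection of analytic invariants attached to the stable types, all of which the single hypothesis on $\mu(D(f_{t}))$ already controls. First I would record that, by formulas (a) and (b) of Theorem~\ref{40} together with the upper semi-continuity of $\mu(D^{2}(f_{t}))$, $\mu(D^{2}(f_{t})/S_{2})$, $C(f_{t})$ and $T(f_{t})$ (compare the Corollary following Theorem~\ref{40}), the constancy of $\mu(D(f_{t}))$ forces each of these four invariants to be constant along $T$. Thus every numerical invariant governing the $0$- and $1$-dimensional stable types, in source and in target, is preserved by the unfolding.

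Next I would fix a Mather--Gaffney representative of $F$ (\cite{mather}) and use these constancies to show that $F$ is a good unfolding, i.e. that the locus where the members $f_{t}$ fail to be stable is exactly the parameter axis $T$. The argument is a conservation-of-number argument: in this range of dimensions the stable singularities are the double points (measured by the double point curve), the cross-caps (measured by $C$) and the triple points (measured by $T$); if, for small $t\neq 0$, any of these were to split off from the origin and assemble a new unstable configuration, the corresponding invariant would be strictly smaller for $t\neq 0$ than at $t=0$, contradicting the constancy established above. Hence nothing escapes the origin and the instability locus is the curve $T$. As a cross-check, Theorem~\ref{18} gives that $F$ is moreover topologically trivial, so that the embedded topological type of $(f_{t}(\mathbb{C}^{2}),0)$ is constant along $T$, which is consistent with $T$ being a single stratum.

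Finally I would assemble the natural stratification. Constancy of $\mu(D^{2}(f_{t}))$ and of $\mu(D^{2}(f_{t})/S_{2})$ makes $D^{2}(F)$, the double point curve $D(F)$ in the source, and its image $F(D^{2}(F))$ in the target into equisingular families over $T$, so that $D(F)\setminus T$ and $F(D^{2}(F))\setminus T$ are well-defined strata carrying a single $1$-dimensional stable type; constancy of $C(f_{t})$ and $T(f_{t})$ does the same for the $0$-dimensional stable types, which then occur in a constant number of branches. Together with the open strata this produces precisely the source and target stratifications listed before the definition of excellent unfolding, with $T$ as a stratum and $F$ preserving the stratification; that is, $F$ is excellent. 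I expect the main obstacle to be the good-unfolding step of the previous paragraph: passing from the constancy of the single aggregated invariant $\mu(D(f_{t}))$ to the geometric statement that no stable singularity escapes the origin. This is where the decoupling supplied by Theorem~\ref{40} is indispensable, since it is what lets a conservation-of-number argument be run separately on each stable type.
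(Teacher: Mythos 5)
The paper offers no proof of this statement: it is quoted from the literature (\cite{ruas}, Theorem 8.7, and \cite{gaffney}, Theorem 5.2), so there is nothing internal to compare your argument against. Judged on its own terms, your outline follows the standard route of those references: use Theorem \ref{40} and semicontinuity to get constancy of $C(f_{t})$, $T(f_{t})$ and the Milnor numbers of the double point spaces from the single hypothesis, then run conservation-of-number arguments to show that no cross-cap, triple point, or singular point of $D(f_{t})$ escapes the origin, so that the instability locus is exactly $T$ and the stable types stratify the complement. That core is sound. To make the goodness step airtight you should also record that each $f_{t}$ is itself finitely determined (since $\mu(D(f_{t}))<\infty$, by Theorem \ref{criterio}), so the only issue is uniformity in $t$ of the neighbourhood on which $f_{t}$ is stable off the origin, and you should verify the target condition $F^{-1}(\{0\}\times T)=\{0\}\times T$, which is part of Gaffney's definition of a good unfolding and is not addressed in your sketch.

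The one genuine problem is your final paragraph, where you assert that constancy of $\mu(D^{2}(f_{t}))$ and $\mu(D^{2}(f_{t})/S_{2})$ makes $D(F)$ and, in particular, $F(D^{2}(F))$ into \emph{equisingular} families over $T$. If ``equisingular'' is meant in the Whitney sense, this is false and proves far too much: Examples \ref{osegundocontraex} and \ref{oterceirocontraex} of this paper are $\mu$-constant (hence, by the very theorem you are proving, excellent) unfoldings in which $m_{0}(f_{t}(D(f_{t})))$ jumps, so the family of image curves is not even equimultiple, let alone Whitney equisingular along $T$. Excellence requires only that these sets minus $T$ be smooth strata consisting of a single stable type and that the $0$-dimensional stable types lie on $T$; that is exactly what goodness delivers, and the stronger equisingularity claim should be deleted, since in this setting it is precisely the gap between excellence and Whitney equisingularity that the rest of the paper exploits.
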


In \cite{marar2}, Marar and Nuño-Ballesteros show that in the corank $1$ case the Whitney equisingularity of $f_{t}$ is characterized by the constancy of three invariants $C(f_{t})$, $J(f_{t})$ and $T(f_{t})$, where $J(f_{t})$ is the number of tacnodes that appear in a generic perturbation of the transversal section of $f_{t}$.

\begin{theorem}\rm{(\cite{marar2} Corollary $4.7$)}\label{16}
\textit{Let $F$ be a $1$-parameter unfolding of a finitely determined map germ $f:(\mathbb{C}^2,0)\rightarrow (\mathbb{C}^3,0)$ with corank $1$. Then, the following conditions are equivalent:}

\begin{flushleft}
\textit{(a) $F$ is Whitney equisingular.}
\end{flushleft}

\begin{flushleft}
\textit{(b) $C(f_{t})$, $J(f_{t})$ and $T(f_{t})$ are constant.}
\end{flushleft}

\end{theorem}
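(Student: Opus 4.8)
The plan is to deduce the equivalence from Gaffney's corank $1$ criterion, Theorem \ref{13}(b), by translating its two constancy conditions into the three invariants $C$, $J$, $T$. By Theorem \ref{13}(b), $F$ is Whitney equisingular if and only if $\mu(D(f_t))$ and $m_0(f_t(D(f_t)))$ are constant, so it suffices to prove that $\mu(D(f_t))$ and $m_0(f_t(D(f_t)))$ are constant if and only if $C(f_t)$, $J(f_t)$ and $T(f_t)$ are constant.

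First I would unpack the Mond number. Combining parts (a) and (b) of Theorem \ref{40} gives $\mu(D(f_t)) = 2\mu(D^2(f_t)/S_2) + C(f_t) + 6T(f_t) - 1$. Since each of $\mu(D^2(f_t)/S_2)$, $C(f_t)$ and $T(f_t)$ is upper semicontinuous in $t$, constancy of the left-hand side forces each summand to be constant; this is exactly the mechanism behind the corollary following Theorem \ref{40}. Hence constancy of $\mu(D(f_t))$ is equivalent to the simultaneous constancy of $C(f_t)$, $T(f_t)$ and $\mu(D^2(f_t)/S_2)$.

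The core step is to control the remaining data, $\mu(D^2(f_t)/S_2)$ and $m_0(f_t(D(f_t)))$, by the tacnode number $J(f_t)$ through the transversal slice. Writing $f_t(x,y)=(x,p_t(x,y),q_t(x,y))$ and slicing the image surface $f_t(\mathbb{C}^2)$ by a generic plane through a point near the origin produces a $1$-parameter family of plane curves; a generic perturbation of such a section carries only nodes and tacnodes, and $J(f_t)$ counts the tacnodes. I would prove a conservation-of-number formula expressing $m_0(f_t(D(f_t)))$ --- the number of intersection points of the image double curve $f_t(D(f_t))$ with a generic plane, which appear as the nodes of the section --- in terms of $J(f_t)$, $T(f_t)$, and the slice multiplicity governed by $C(f_t)$. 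Carrying out the analogous count on $D^2(f_t)/S_2$ should yield formulas expressing both $\mu(D^2(f_t)/S_2)$ and $m_0(f_t(D(f_t)))$ as explicit functions of $C(f_t)$, $J(f_t)$, $T(f_t)$, together with an inversion recovering $J(f_t)$ from $m_0(f_t(D(f_t)))$ once $C(f_t)$ and $T(f_t)$ are fixed.

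With these formulas in hand the equivalence follows in both directions. If $C$, $J$, $T$ are constant, then so are $\mu(D^2/S_2)$ and $m_0(f_t(D(f_t)))$, whence $\mu(D(f_t))$ and $m_0(f_t(D(f_t)))$ are constant; conversely, constancy of $\mu(D(f_t))$ already yields constancy of $C$ and $T$ by the semicontinuity argument of the second paragraph, and then constancy of $m_0(f_t(D(f_t)))$ forces $J$ to be constant through the inverted conservation formula. I expect the main obstacle to be precisely the transversal-slice analysis of the core step: one must show that a generic plane section of the image surface realizes $m_0(f_t(D(f_t)))$ as a genuine node count, identify the tacnodes appearing in a generic perturbation with the invariant $J(f_t)$, and verify that the resulting conservation relation is an \emph{equality} rather than merely an inequality --- which rests on the genericity of the slicing plane and the upper semicontinuity of the tacnode count.
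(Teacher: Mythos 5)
The paper does not actually prove this statement: it is quoted verbatim from \cite{marar2} (Corollary 4.7), so there is no internal proof to compare against. That said, your reduction skeleton is the standard one and is sound as far as it goes: by Theorem \ref{13}(b), in corank $1$ Whitney equisingularity is equivalent to the constancy of $\mu(D(f_t))$ and $m_0(f_t(D(f_t)))$, and upper semicontinuity of each invariant correctly converts constancy of a sum into constancy of each summand (this is the same mechanism as in the corollary following Theorem \ref{40}). Your final paragraph would close the argument \emph{if} the key identity were in hand.

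The genuine gap is that the key identity is never established. The entire ``core step'' is programmatic (``I would prove a conservation-of-number formula\dots'', ``should yield formulas\dots''), and no explicit relation among $m_0(f_t(D(f_t)))$, $J(f_t)$, $C(f_t)$, $T(f_t)$ is ever written down, let alone proved. That relation is the actual content of the theorem: it is the Marar--Nu\~no-Ballesteros formula $C(f)+2J(f)+6T(f)=\mu(D(f))+2m_0(f(D(f)))-1$, which this paper records (again without proof) as Remark \ref{31}. Once you have that single identity, your detour through $\mu(D^2(f_t)/S_2)$ is unnecessary: if $C$, $J$, $T$ are constant the left side is constant, semicontinuity splits the right side into constancy of $\mu(D(f_t))$ and $m_0(f_t(D(f_t)))$, and Gaffney finishes; conversely, Whitney equisingularity gives constancy of the right side, and semicontinuity of $C$, $J$, $T$ splits the left. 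Establishing the identity itself requires the transversal-slice analysis of \cite{marar2} --- showing that the generic slice of the image deforms to a curve with only nodes and tacnodes, identifying the node and tacnode counts with $m_0(f(D(f)))$ and $J(f)$, and computing the Milnor/delta invariant of the slice in two ways --- and verifying equality rather than inequality there is precisely the hard part. As written, your proposal assumes it.
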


\begin{remark}\label{31}\rm Also in \cite{marar2}, they show that if $f:(\mathbb{C}^{2},0)\rightarrow(\mathbb{C}^3,0)$ is finitely determined, then $C(f)+2J(f)+6T(f)=\mu(D(f))+2m_{0}(f(D(f)))-1$. Therefore, the constancy of the invariants on the left side of the equality occurs if and only if $\mu(D(f))$ and $m_{0}(f(D(f)))$ are constant.
\end{remark}

Another useful result is the following lemma:

\begin{lemma}\rm(\cite{gaffney}, Proposition 8.4 and \cite{guilhermo}, Lemma 5.2)\label{5}
\textit{Let $f:(\mathbb{C}^2,0)\rightarrow(\mathbb{C}^3,0)$ be a finitely determined map germ. If $H \subset \mathbb{C}^3$ is a generic plane, $Y_{0}=H\cap f(\mathbb{C}^2)$ and $\tilde{Y}_{0}$ is the plane curve in $(\mathbb{C}^2,0)$ given by $\tilde{Y}_{0}=f^{-1}(H)$, we have:}

\begin{flushleft}
\textit{(a) $m_{1}(f(\mathbb{C}^2),0)=\mu(\tilde{Y}_{0},0)+ m_{0}(f(\mathbb{C}^2),0)-1$.}
\end{flushleft}

\begin{flushleft}
\textit{(b) $\mu_{1}(f(\mathbb{C}^2),0)=\mu(\tilde{Y}_{0},0)+2\cdot m_{0}(f(D(f)),0)$.}
\end{flushleft}

\end{lemma}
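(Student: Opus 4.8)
The plan is to exploit that $f$ is finite and generically one-to-one onto the image surface $X=f(\mathbb{C}^2)$, so that $f\colon(\mathbb{C}^2,0)\to(X,0)$ is its normalization. Every invariant of the (non-isolated) hypersurface singularity $X$ can then be transported to the smooth source, where the genericity of $H$ makes the computation transparent. After a generic linear change of coordinates in $\mathbb{C}^3$ I may assume $H=\{Z=0\}$, so that $\tilde{Y}_{0}=f^{-1}(H)=V(f_3)$, and the generic projection used to define the polar curve is $\Phi=(f_1,f_2)\colon(\mathbb{C}^2,0)\to(\mathbb{C}^2,0)$.

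For part (a), I first record that $m_{0}(f(\mathbb{C}^2),0)=\dim_{\mathbb{C}}\mathcal{O}_2/(f_1,f_2)$: the multiplicity of $X$ equals the degree of the generic linear projection $\pi|_{X}\colon X\to\mathbb{C}^2$, and since $f$ is birational onto $X$ this degree is the local degree of $\Phi=\pi\circ f$. Next I lift the first polar curve $P_{1}(X)$ to the source. Since $P_{1}(X)$ is the closure of the critical locus of $\pi|_{X_{\mathrm{reg}}}$ and $f$ is an immersion, one-to-one, along it (away from the finite set $\Sigma(f)$), its preimage is the ramification curve $\tilde{P}=V(J)$ of $\Phi$, where $J=\partial(f_1,f_2)/\partial(x,y)$, and $f$ maps $\tilde{P}$ isomorphically onto $P_{1}(X)$ near $0$. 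Hence $m_{1}(f(\mathbb{C}^2),0)=m_{0}(P_{1}(X),0)$ is the number of points in which $P_{1}(X)$ meets a generic hyperplane, which upstairs is $\tilde{P}\cdot\tilde{Y}_{0}=\dim_{\mathbb{C}}\mathcal{O}_2/(f_3,J)$. Statement (a) then reduces to the colength identity
\[
\dim_{\mathbb{C}}\frac{\mathcal{O}_2}{(f_3,J)}=\mu(\tilde{Y}_{0},0)+\dim_{\mathbb{C}}\frac{\mathcal{O}_2}{(f_1,f_2)}-1,
\]
which I would prove by a L\^e--Greuel/conservation-of-number argument: perturbing $\Phi$ and $f_3$ generically and counting, on the Milnor fibre, the points where the section meets the ramification locus, separating the contribution of the proper singularity of $\tilde{Y}_{0}$ (giving $\mu(\tilde{Y}_{0},0)$) from the generic folding of $\Phi$ (measured by $\deg\Phi=m_{0}(X,0)$).

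For part (b), I identify $\mu_{1}(f(\mathbb{C}^2),0)$ with the Milnor number $\mu(Y_{0},0)$ of the generic plane section $Y_{0}=H\cap X$, a reduced plane-curve germ with isolated singularity. The restriction $f\colon(\tilde{Y}_{0},0)\to(Y_{0},0)$ is finite and generically one-to-one; for $H$ generic $\tilde{Y}_{0}$ is transverse to $D(f)$, so $f$ identifies only finitely many pairs of points and therefore preserves the number of branches, $r(Y_{0})=r(\tilde{Y}_{0})=:r$. Comparing $\delta$-invariants along this normalization (the normalizations of $\tilde{Y}_{0}$ and $Y_{0}$ coincide) gives $\delta(Y_{0},0)=\delta(\tilde{Y}_{0},0)+d$, where $d=\dim_{\mathbb{C}}f_{\ast}\mathcal{O}_{\tilde{Y}_{0}}/\mathcal{O}_{Y_{0}}$ is the number of double points created by $f$. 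These double points are exactly the points of $(f(D(f))\setminus 0)\cap H$, and since $H$ is generic it meets the curve $f(D(f))$ in $m_{0}(f(D(f)),0)$ points near $0$, each a transverse node of $Y_{0}$; hence $d=m_{0}(f(D(f)),0)$. Substituting into $\mu=2\delta-r+1$ for both germs (same $r$) and subtracting yields $\mu(Y_{0},0)-\mu(\tilde{Y}_{0},0)=2d=2\,m_{0}(f(D(f)),0)$, which is (b).

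The main obstacle in both parts is the same transversality and multiplicity bookkeeping. In (a) it is the colength identity: one must show that the ramification curve of the generic projection meets the section in exactly $\mu(\tilde{Y}_{0},0)+m_{0}(X,0)-1$ points, i.e. control precisely how much of the intersection is due to the singularity of $\tilde{Y}_{0}$ and how much to the generic folding of $\Phi$; a L\^e--Greuel formula for the restriction of $\Phi$ to $\tilde{Y}_{0}$ is the natural tool, but it requires verifying that $f_3$ and $J$ form a regular sequence (finiteness of the colength), which fails for special coordinates and is exactly why genericity of $H$ is essential. In (b) it is justifying that the double-point scheme of $f|_{\tilde{Y}_{0}}$ has length exactly $m_{0}(f(D(f)),0)$ and consists of ordinary nodes for generic $H$, so that the $\delta$-jump is counted correctly even when some double points collide at $0$.
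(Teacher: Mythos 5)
The paper does not prove this lemma: it is quoted verbatim from the references, with the sentence immediately following it stating that item (a) is due to Gaffney and item (b) to Marar, Nu\~no-Ballesteros and Pe\~nafort-Sanchis. So there is no internal proof to compare against; your reconstruction follows the same general route as those references (lifting the polar curve through the normalization for (a), and the $\mu=2\delta-r+1$ bookkeeping along the birational map $\tilde{Y}_0\to Y_0$ for (b)). The overall strategy is sound, and the reduction of (b) to the identity $\dim_{\mathbb{C}}\bigl(f_*\mathcal{O}_{\tilde{Y}_0}/\mathcal{O}_{Y_0}\bigr)=m_0(f(D(f)),0)$ together with $r(Y_0)=r(\tilde{Y}_0)$ is exactly the right skeleton.

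There are, however, two places where what you write is a plan rather than a proof, and one where the stated justification is literally false. First, in (a) the colength identity $\dim_{\mathbb{C}}\mathcal{O}_2/(f_3,J)=\mu(\tilde{Y}_0,0)+m_0(f(\mathbb{C}^2),0)-1$ \emph{is} the theorem; saying you ``would prove it by a L\^e--Greuel/conservation-of-number argument'' leaves the entire content unestablished (note that the identity is badly false for non-generic coordinates, e.g.\ $\Phi=\mathrm{id}$, so the genericity must enter the computation in an essential, quantitative way, not just to guarantee finite colength). Second, in (b) you justify $d=m_0(f(D(f)),0)$ by saying the double points of $f|_{\tilde{Y}_0}$ ``are exactly the points of $(f(D(f))\setminus 0)\cap H$ \dots each a transverse node.'' At the level of germs this is wrong: $H$ passes through $0$, the quotient $f_*\mathcal{O}_{\tilde{Y}_0}/\mathcal{O}_{Y_0}$ is concentrated at the origin, and $(f(D(f))\setminus 0)\cap H$ can be empty while $d>0$ (for the cross-cap $f(x,y)=(x,y^2,xy)$ one has $d=1$ entirely at $0$). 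The correct statement is $d=i(f(D(f)),H;0)=m_0(f(D(f)),0)$, and proving it requires either the Mond--Pellikaan base-change property of the Fitting ideal $F_1(f_*\mathcal{O}_2)$ under the generic slice, or a genuine conservation-of-number argument deforming $H$ off the origin; you gesture at this in your final paragraph (``even when some double points collide at $0$'') but do not carry it out. Until those two identities are actually proved, the argument is an accurate reduction of the lemma to its two hard points, not a proof of it.
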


Item $(a)$ was shown in \cite{gaffney}. Item $(b)$ appears in \cite{guilhermo}, in which it is shown that the Whitney equisingularity of $f_{t}$ is characterized by the constancy of only two invariants, $\mu(D(f_{t}))$ and $\mu_{1}(f_{t}(\mathbb{C}^2),0)$.

\begin{theorem}\rm{(\cite{guilhermo} Theorem $5.3$)}\label{15}
\textit{Let $F$ be a $1$-parameter unfolding of a finitely determined map germ $f:(\mathbb{C}^2,0)\rightarrow (\mathbb{C}^3,0)$. Then, $F$ is Whitney equisingular if and only if $\mu(D(f_{t}))$ and $\mu_{1}(f_{t}(\mathbb{C}^2),0))$ are constant.}
\end{theorem}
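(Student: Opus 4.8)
The plan is to prove the equivalence by using Gaffney's criterion (Theorem \ref{13}(a)) as the reference characterization and then transferring constancy between the two lists of invariants by means of Lemma \ref{5}. Since $\mu(D(f_t))$ occurs in both characterizations, after invoking Theorem \ref{13}(a) it suffices to prove that, under the standing hypothesis that $\mu(D(f_t))$ is constant, the pair $\{m_1(f_t(\mathbb{C}^2),0),\, m_0(f_t(D(f_t)),0)\}$ is constant if and only if $\mu_1(f_t(\mathbb{C}^2),0)$ is constant. Throughout I would use that $\mu_1(f_t(\mathbb{C}^2),0)$ is the Milnor number of the generic transversal slice $Y_0=H\cap f_t(\mathbb{C}^2)$, which is a \emph{plane} curve, while $\tilde Y_0=f_t^{-1}(H)$ is its preimage curve in the source; this identification is exactly what reconciles the two formulas of Lemma \ref{5}.

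First I would record what follows from $\mu(D(f_t))$ being constant. By Theorem \ref{18} the unfolding $F$ is topologically trivial, and by the corollary to Theorem \ref{40} the invariants $C(f_t)$ and $T(f_t)$ are constant. Topological triviality trivializes the source, so the family of plane curves $\tilde Y_0(t)=f_t^{-1}(H)$ has constant topological type and hence $\mu(\tilde Y_0(t),0)$ is constant. This quantity is the common term in the two parts of Lemma \ref{5}, and it is the bridge between the descriptions of $m_1$ and of $\mu_1$.

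For the backward direction, assume in addition that $\mu_1(f_t(\mathbb{C}^2),0)$ is constant. Reading Lemma \ref{5}(b) as $\mu_1=\mu(\tilde Y_0)+2\,m_0(f(D(f)))$ and using that $\mu(\tilde Y_0)$ is already constant gives at once that $m_0(f_t(D(f_t)),0)$ is constant. To recover $m_1$, I would use that $\mu_1=\mu(Y_0)$ is the Milnor number of the plane-curve slice $Y_0$: a $1$-parameter $\mu$-constant family of reduced plane curves is equisingular, hence has constant multiplicity, so $m_0(Y_0(t),0)=m_0(f_t(\mathbb{C}^2),0)$ is constant. Substituting this, together with the constancy of $\mu(\tilde Y_0)$, into Lemma \ref{5}(a), $m_1(f_t(\mathbb{C}^2),0)=\mu(\tilde Y_0)+m_0(f_t(\mathbb{C}^2),0)-1$, shows $m_1$ is constant. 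All three invariants of Theorem \ref{13}(a) are then constant, so $F$ is Whitney equisingular. The forward direction is the easy one: Whitney equisingularity implies topological triviality (Thom's second isotopy lemma, recalled in the introduction), whence $\mu(D(f_t))$ is constant by Theorem \ref{18}, and then Theorem \ref{13}(a) gives $m_0(f_t(D(f_t)),0)$ constant, so Lemma \ref{5}(b) together with the constancy of $\mu(\tilde Y_0)$ yields $\mu_1$ constant.

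The step I expect to be the main obstacle is the deduction of the constancy of $m_0(f_t(\mathbb{C}^2),0)$ in the backward direction. Topological triviality by itself does not control the multiplicity of the image surface — this is a Zariski-type phenomenon — so it is crucial that the extra hypothesis is the constancy of $\mu_1=\mu(Y_0)$ for the genuine plane-curve slice, which lets me invoke the equisingularity of $\mu$-constant families of plane curves to force constant multiplicity. I would therefore be careful to justify rigorously (i) that $\mu_1(f_t(\mathbb{C}^2),0)$ really is the Milnor number of the transversal slice $Y_0$, so that the plane-curve theory applies, and (ii) that $\mu(\tilde Y_0(t),0)$ is constant under mere topological triviality of $F$, in particular that a generic plane $H$ can be chosen uniformly in $t$. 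Once these two points are secured, the remainder is bookkeeping with Lemma \ref{5} and Theorem \ref{13}(a).
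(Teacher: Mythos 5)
Your overall strategy --- reduce to Gaffney's characterization (Theorem \ref{13}(a)) and transfer constancy through the two identities of Lemma \ref{5} --- is reasonable, and since the paper only quotes this theorem from \cite{guilhermo} without proof, a self-contained argument along these lines is a legitimate thing to attempt. However, there is a genuine error at exactly the step you flag as delicate: the claim that topological triviality of $F$ forces $\mu(\tilde Y_t,0)=\mu(f_t^{-1}(H),0)$ to be constant. This is false, and the paper's own Example \ref{oprimeiroexemplo} refutes it: the family $f_t(x,y)=(x^2+txy,\ x^2y+xy^2+y^3,\ x^5+y^5)$ is topologically trivial ($\mu(D(f_t))=441$ for all $t$), yet $\mu(\tilde Y_0,0)=2$ while $\mu(\tilde Y_t,0)=1$ for $t\neq 0$; the remark following that example produces a whole class with $\mu(\tilde Y_0,0)=(n-1)(m-1)>(n-1)^2=\mu(\tilde Y_t,0)$. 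The reason your argument breaks is that the trivializing homeomorphism of the source carries $f^{-1}(\psi_t^{-1}(H))$ to $f_t^{-1}(H)$, and $\psi_t^{-1}(H)$ is only a topological image of a plane, so $f_t^{-1}(H)$ for a fixed generic $H$ need not have constant topological type. (For corank $1$ germs $\tilde Y_t$ is smooth and the point is vacuous, but the theorem concerns arbitrary finitely determined germs.) Since both directions of your proof invoke this constancy, both are affected as written.

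The argument can be repaired without the false claim. For the backward direction: both $\mu(\tilde Y_t,0)$ and $m_0(f_t(D(f_t)),0)$ are upper semicontinuous in $t$, so constancy of the combination $\mu_1(f_t(\mathbb{C}^2),0)=\mu(\tilde Y_t,0)+2\,m_0(f_t(D(f_t)),0)$ from Lemma \ref{5}(b) forces each summand to be constant separately --- this is precisely the semicontinuity argument the paper uses in Example \ref{oprimeiroexemplo} to show that $\mu_1$ jumps there. Once $\mu(\tilde Y_t,0)$ and $m_0(f_t(D(f_t)),0)$ are both constant, your appeal to plane-curve equisingularity for the $\mu$-constant family $Y_t$ to get $m_0(f_t(\mathbb{C}^2),0)$ constant, followed by Lemma \ref{5}(a), correctly yields the constancy of $m_1(f_t(\mathbb{C}^2),0)$, and Theorem \ref{13}(a) concludes. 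For the forward direction, derive the constancy of $\mu(\tilde Y_t,0)$ from Whitney equisingularity itself: Theorem \ref{13}(a) gives $m_1(f_t(\mathbb{C}^2),0)$ and $m_0(f_t(D(f_t)),0)$ constant, equimultiplicity of Whitney equisingular families gives $m_0(f_t(\mathbb{C}^2),0)$ constant, Lemma \ref{5}(a) then gives $\mu(\tilde Y_t,0)$ constant, and Lemma \ref{5}(b) gives $\mu_1$ constant.
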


The invariant $\mu_1(f(\mathbb{C}^2),0)$ is defined in \cite{guilhermo} as follows. As in Lemma \ref{5}, let $H$ be a generic plane in $\mathbb{C}^3$, $Y_0:=f(\mathbb{C}^2) \cap H$ and $\hat{Y}_0:=f^{-1}(H)$. For a generic $H$, $(Y_0,0)$ and $(\hat{Y}_0,0)$ are germs of reduced plane curves. Then, $\mu_1(f(\mathbb{C}^2),0):=\mu(Y_0,0)$. Let $F=(f_t(x),t)$ be an unfolding of $f$, for the next section, we will adopt the following notation, $(Y_{t},0):=(f_t(\mathbb{C}^2) \cap H,0)$ and $(\hat{Y}_{t},0):=(f_t^{-1}(H),0)$.

\section{Counterexamples to Ruas's conjecture}\label{section5}

$ \ \ \ \ $ In this section, we use Mond-Pellikaan's algorithm in \cite{mond1} to find a presentation matrix of a finite analytic map germ $g:(X,0)\rightarrow(\mathbb{C}^{n+1},0)$, where $(X,x)$ is a germ of Cohen-Macaulay analytic space of dimension $n$. For the computations we have made use of the software Singular \cite{singular} and the implementation of Mond-Pellikaan's algorithm given by Hernandes, Miranda, and Peñafort-Sanchis in \cite{hernandes}. At the webpage of Miranda \cite{aldicio} one can find a Singular library to compute presentation matrices based on the results of \cite{hernandes}.\\

The following example shows that in corank $2$ case the conjecture is false.

\begin{example}\label{oprimeiroexemplo}
Let $f:(\mathbb{C}^2,0)\rightarrow (\mathbb{C}^3,0)$ be defined by:

\begin{center}
$f(x,y)=(x^2, \ x^2y+xy^2+y^3, \ x^5+y^5)$
\end{center}

First we show that f is finitely determined. Denote by $(x,y,x^{'},y^{'})$ a point in $\mathbb{C}^2 \times \mathbb{C}^2$ and by $q^n$ the homogeneous function $q^n:\mathbb{C}^2\rightarrow \mathbb{C}$ defined by $q^n(w,w^{'})=w^n+w^{n-1}w^{'}+\cdots+ww^{'n-1}+w^{'n}$. Take a representative $f:U \rightarrow \mathbb{C}^3$ of the germ $f$ defined in a small enough neighborhood $U$ of $0$. Then the double point ideal $\mathcal{I}^2(f)$ is generated by $h_{1},\cdots, h_{6} \in \mathbb{C}\lbrace x,y,x^{'},y^{'} \rbrace$, where:

\begin{flushleft}
$h_{1}(x,y,x^{'},y^{'})=(x+x^{'})(x-x^{'}), \ \ \ \ \ \ \  \ \ \ \ \ \ \ \ \ \ \ \ \   h_{2}(x,y,x^{'},y^{'})= q^4(y,y^{'})(x+x^{'})$\\
\end{flushleft}

\begin{flushleft}
$h_{3}(x,y,x^{'},y^{'})=q^4(x,x^{'})(x-x^{'})+q^4(y,y^{'})(y-y^{'}),  \  h_{4}(x,y,x^{'},y^{'})=(x+x^{'}) \displaystyle\left(2q^2(y,y^{'})+(x+x^{'})(y+y^{'})+(x^2+x^{'2}) \right)$
\end{flushleft}

\begin{flushleft}
$h_{5}(x,y,x^{'},y^{'})= q^4(y,y^{'}) \displaystyle\left((y+y^{'})(x+x^{'})+(y^{2}+y^{'2})\right)- q^4(x,x^{'}) \displaystyle\left(2q^2(y,y^{'})+(x+x^{'})(y+y^{'})+(x^2+x^{'2})\right)$
\end{flushleft}

\begin{flushleft}
$h_{6}(x,y,x^{'},y^{'})=\displaystyle\left((y+y^{'})(x+x^{'})+(y^{2}+y^{'2})\right)(x-x^{'}) + \displaystyle\left(2q^2(y,y^{'})+(x+x^{'})(y+y^{'})+(x^2+x^{'2})  \right)(y-y^{'})$
\end{flushleft}

The projection $p: D^2(f) \subset U \times U \rightarrow U$, is just $(x, y, x^{'}, y^{'}) \mapsto (x,y)$. Then the ideal $\mathcal{F}_{0}(p_{\ast}\mathcal{O}_{D^{2}(f)})$ is generated by the determinant of the following presentation matrix $22 \times 22$ of $p_{\ast}\mathcal{O}_{D^{2}(f)}$, obtained by means of the Singular library mentioned above:

\fontsize{14}{14}
{\scriptsize $$\left( \begin{array}{cccccccccccccccccccccc}
 
y & -1 & 0 & 0 & 0 & 0 & 0 & 0 & 0 & 0 & 0 & 0 & 0 & 0 & 0 & 0 & 0 & 0 & 0 & 0 & 0 & 0\\

0 & y & -1 & 0 & 0 & 0 & 0 & 0 & 0 & 0 & 0 & 0 & 0 & 0 & 0 & 0 & 0 & 0 & 0 & 0 & 0 & 0\\

0 & x^2 & g_{14} & 0 & 0 & -x^2 & 0 & 0 & 0 & 0 & 0 & 0 & 0 & -1 & 0 & -1 & 0 & 0 & 0 & 0 & 0 & 0\\

0 & 0 & 0 & y & -1 & 0 & 0 & 0 & 0 & 0 & 0 & 0 & 0 & 0 & 0 & 0 & 0 & 0 & 0 & 0 & 0 & 0\\

x^3 & x^2 & x & x^2 & g_{14} & x^2 & x & 0 & x & 1 & x & 0 & 0 & 1 & 0 & 0 & 0 & 0 & 0 & 0 & 0 & 0\\

0 & 0 & 0 & 0 & 0 & y & -1 & 0 & 0 & 0 & 0 & 0 & 0 & 0 & 0 & 0 & 0 & 0 & 0 & 0 & 0 & 0\\

0 & 0 & 0 & 0 & 0 & 0 & y & -1 & 0 & 0 & 0 & 0 & 0 & 0 & 0 & 0 & 0 & 0 & 0 & 0 & 0 & 0\\

0 & 0 & 0 & 0 & 0 & 0 & x^2 & g_{14} & 0 & 0 & -x^2 & 0 & 0 & 0 & 0 & 0 & 0 & -1 & -1 & 0 & 0 & 0\\

0 & 0 & 0 & 0 & 0 & 0 & 0 & 0 & y & -1 & 0 & 0 & 0 & 0 & 0 & 0 & 0 & 0 & 0 & 0 & 0 & 0\\

0 & 0 & 0 & 0 & 0 & x^3 & x^2 & x & x^2 & g_{14} & x^2 & x & 0 & x & 1 & x & 0 & 1 & 0 & 0 & 0 & 0\\

0 & 0 & 0 & 0 & 0 & 0 & 0 & 0 & 0 & 0 & y & -1 & 0 & 0 & 0 & 0 & 0 & 0 & 0 & 0 & 0 & 0\\

0 & 0 & 0 & 0 & 0 & 0 & 0 & 0 & 0 & 0 & 0 & y & -1 & 0 & 0 & 0 & 0 & 0 & 0 & 0 & 0 & 0\\

0 & 0 & 0 & 0 & 0 & 0 & 0 & 0 & 0 & 0 & 0 & x^2 & g_{14} & 0 & 0 & -x^2 & 0 & 0 & 0 & 0 & -1 & -1\\

0 & 0 & 0 & 0 & 0 & 0 & 0 & 0 & 0 & 0 & 0 & 0 & 0 & y & -1 & 0 & 0 & 0 & 0 & 0 & 0 & 0\\

0 & 0 & g_{36} & 0 & g_{1} & 0 & g_{38} & g_{2} & 0 & xy & 0 & x^2 & x & x^2 & g_{15} & x^2 & x & 0 & -x & 0 & 0 & 0\\

0 & 0 & 0 & 0 & 0 & 0 & 0 & 0 & 0 & 0 & 0 & 0 & 0 & 0 & 0 & y & -1 & 0 & 0 & 0 & 0 & 0\\

0 & 0 & 0 & g_{42} & -x^2y & 0 & 0 & g_{39} & 0 & g_{3} & -x^3 & x^2 & -x & -x^2 & 0 & 0 & g_{16} & 0 & 0 & 0 & 0 & -1\\

0 & 0 & g_{37} & 0 & -g_{1} & 0 & -x^3 & -x^2 & 0 & -xy & 0 & 0 & 0 & 0 & -x & 0 & 0 & y & x & 0 & 1 & 0\\

0 & 0 & 0 & 0 & 0 & 0 & 0 & 0 & 0 & 0 & 0 & 0 & 0 & 0 & 0 & 0 & 0 & 0 & y & -1 & 0 & 0\\

0 & 0 & g_{5} & 0 & g_{6} & 0 & -x^4 & g_{7} & g_{42} & g_{8} & g_{21} & g_{23} & g_{24} & g_{25} & g_{26} & 0 & g_{27} & 0 & g_{28} & g_{17} & g_{29} & g_{20}\\

0 & 0 & g_{41} & 0 & g_{40} & 0 & -x^4 & g_{9} & 0 & -g_{1} & x^4 & -x^3 & g_{19} & 0 & 0 & 0 & 0 & 0 & x^2 & x & g_{14} & x\\

0 & 0 & g_{11} & 0 & g_{12} & 0 & x^4 & g_{13} & 2x^4 & g_{14} & g_{22} & g_{35} & g_{15} & g_{30} & g_{31} & 0 & g_{32} & 0 & g_{33} & g_{34} & g_{29} & g_{18}

\end{array} \right)$$}

\fontsize{10}{10}
\noindent where 
\begin{center}
$g_{1}=x^2y+xy^2$, $ \ g_{2}=2x^2+xy$, $ \ g_{3}=x^2+xy$, $ \ g_{4}=\dfrac{7}{10}x^3y+\dfrac{11}{10}xy^3$, $ \ g_{5}=\dfrac{-9}{10}x^2y^2-\dfrac{9}{5}xy^3$, $ \ g_{6}=\dfrac{-13}{10}x^3 - \dfrac{7}{10}x^2y+\dfrac{2}{5}xy^2$, $ \ g_{7}=\dfrac{-11}{10}x^2y - \dfrac{3}{2}xy^2$, $ \ g_{8}=-2x^3-2x^2y$, $ \ g_{9}=\dfrac{-3}{10}x^3y-\dfrac{9}{10}xy^3$, $ \ g_{10}=\dfrac{11}{10}x^2y^2+\dfrac{11}{10}xy^3$, $ \ g_{11}=\dfrac{7}{10}x^3- \dfrac{7}{10}x^2y - \dfrac{8}{5}xy^2$, $ \ g_{12}=\dfrac{9}{10}x^2y+\dfrac{3}{2}xy^2$, $ \ g_{13}=\dfrac{17}{5}x^2-xy$, $ \ g_{14}=x+y$, $ \ g_{15}=2x+y$, $ \ g_{16}=-x+y$, $ \ g_{17}=\dfrac{-7}{5}x+y$, $ \ g_{18}=\dfrac{17}{10}x+y$, $ \ g_{19}=-x^2-xy$, $ \ g_{20}=\dfrac{-13}{10}x$, $ \ g_{21}=\dfrac{11}{10}x^4$, $ \ g_{22}=\dfrac{-9}{10}x^4$, $ \ g_{23}=\dfrac{7}{5}x^3$, $ \ g_{24}=\dfrac{13}{5}x^2$,  $ \ g_{25}= \dfrac{19}{10}x^3$, $ \ g_{26}=\dfrac{6}{5}x^2$, $ \ g_{27}= \dfrac{-14}{5}xy$, $ \ g_{28}= \dfrac{13}{10}x^2$, $ \ g_{29}= \dfrac{x}{10}$, $ \ g_{30}= \dfrac{-21}{10}x^3$, $ \ g_{31}= \dfrac{4}{5}x^2$, $ \ g_{32}= \dfrac{11}{5}xy$, $ \ g_{33}= \dfrac{-17}{10}x^2$, $ \ g_{34}= \dfrac{8}{5}x$, $ \ g_{35}= \dfrac{8}{5}x^3$, $ \ g_{36}=-xy^2$, $ \ g_{37}=xy^2$, $ \ g_{38}=2x^3$, $ \ g_{39}=2x^2$, $ \ g_{40}=-x^2y^2$, $ \ g_{41}=-x^3y$ and $ \ g_{42}=-2x^4$.
\end{center}

\noindent Then,

\begin{center}
$D(f)=V(\mathcal{F}_{0}(p_{\ast}\mathcal{O}_{D^{2}(f)}))=V(3x^{22}+4x^{21}y+8x^{20}y^{2}+22x^{19}y^{3}+57x^{18}y^{4}+120x^{17}y^{5}+154x^{16}y^{6}+82x^{15}y^7-141x^{14}y^{8}-484x^{13}y^9-790x^{12}y^{10}-988x^{11}y^{11}-1064x^{10}y^{12}-1098x^9y^{13}-1102x^8y^{14}-1026x^7y^{15}-837x^6y^{16}-578x^5y^{17}-333x^4y^{18}-160x^3y^{19}-65x^2y^{20}-20xy^{21}-5y^{22})$.
\end{center}

Notice that $\mathcal{F}_{0}(p_{\ast}\mathcal{O}_{D^{2}(f)})$ is a homogeneous function of degree $22$. Moreover, it is not difficult to check with the help of a computer that it has isolated singularity. This implies that $f$ is finitely determined by Theorem \rm\ref{criterio}. \textit{Now, consider the following $1-$parameter unfolding $F=(f_{t}(x,y),t)$ of $f$ defined by:}

\begin{center}
$f_{t}(x,y)=(x^2+txy, \ x^2y+xy^2+y^3, \ x^5+y^5)$
\end{center}

\textit{We have that $f$ is homogeneous and that the unfolding $F$ only adds terms of same degree. This implies that
$F$ is topologically trivial by} \rm\cite{damon3}. \textit{Alternatively, a calculation shows that $\mu(D(f_t))=441$ for all $t$, then by Theorem} \rm \ref{18} \textit{$F$ is topologically trivial.} \textit{Choose constants $a, b$ and $c$ in $\mathbb{C}$ with $a\neq 0$ such that the plane $H$ defined by $H=V(aX+bY+cZ)$ is generic. In this way the family $(Y_{t},0)$ is given by}

\begin{center}
$(Y_{t},0)=V(a(x^2+txy)+b(x^2y+xy^2+y^3)+c(x^5+y^5))$
\end{center}

\textit{Then, $\mu(Y_{0},0)=2$ and $\mu(Y_{t},0)=1$ for $t \neq 0$. By Lemma} \rm\ref{5} \textit{and the upper semicontinuity of the invariants, $\mu_1(f_t(\mathbb{C}^2))$ can not be constant. Hence $F$ is not Whitney equisingular by Theorem} \rm\ref{15}.\\

 \textit{Using the formulas in Definition} \rm\ref{def51} \textit{and Theorem} \rm\ref{40}, \textit{we have that $C(f_t)=14$, $T(f_t)=56$ and $\mu(f_t(D(f_t)))=270$ for all $t$.}

\textit{The multiplicities $m_{0}(f_{t}(D(f_{t})))$ and $m_0(f_t(\mathbb{C}^2))$ remain constant. In fact, $m_{0}(f_{t}(D(f_{t})))=22$ and $m_0(f_t(\mathbb{C}^2))=6$ for all $t$. Moreover, by Lemma} \rm\ref{5} \textit{it follows that $\mu_{1}(f(\mathbb{C}^2),0)=46$, $\mu_{1}(f_{t}(\mathbb{C}^2),0)=45$ for $t\neq 0$, $m_{1}(f(\mathbb{C}^2),0)=7$ and $m_{1}(f_{t}(\mathbb{C}^2),0)=6$ for $t\neq 0$.}
\end{example}

\begin{remark} \textit{In} \rm\cite{guilhermo2}, \textit{Peñafort-Sanchis shows that if $n,m,k\geq 2$ are coprime integers, then the map germ}

\begin{center}
$f(x,y)=(x^n,y^m,(x+y)^k)$
\end{center}

\noindent \textit{is finitely determined. So a way to find a class of topologically trivial unfoldings with the property that $\mu(\hat{Y}_t,0)$ is not constant, as in the previous example, is the following:}\\

\textit{Let $n,m,k\geq 2$ be coprime integers, with $n<m<k$, and consider the map germ $f(x,y)=(x^n,y^m,(x+y)^k)$. Let be $F=(f_t(x,y),t)$ defined by}

\begin{center}
$f(x,y)=(x^n+ty^{n},y^m,(x+y)^k)$
\end{center}

\textit{The map germ $f$ is homogeneous and the unfolding $F$ only adds terms of same degree. Again, this implies that $F$ is topologically trivial by} \rm\cite{damon3}. \textit{Also,} 

\begin{center}
$\mu(\tilde{Y}_0,0)=(n-1)(m-1)>(n-1)(n-1)=\mu(\tilde{Y}_t,0)$, \textit{for} $t \neq 0$, 
\end{center}

\noindent \textit{then $F$ is not Whitney equisingular by Theorem} \rm\ref{15}.

\end{remark}

The following two examples show that if an unfolding $F$ of $f$ is topologically trivial, the multiplicity $m_0(f_t(D(f_t)))$ does not have to be constant. In the first $f$ has corank $2$, and in the second $f$ has corank $1$.

\begin{example}\label{osegundocontraex}
Consider the map germ $f:(\mathbb{C}^2,0)\rightarrow (\mathbb{C}^3,0)$ defined by

\begin{center}
$f(x,y)=(x^3,y^5,x^2-xy+y^2)$
\end{center}

It is not difficult to compute the double point curve $D(f)$ with the help of a computer and verify that $f$ is finitely determined, by just following the same routine calculations used in Example \rm\ref{oprimeiroexemplo}. \textit{In fact, $D(f)$ is a reduced curve given by}

$D(f)=V((81x^{16}-324x^{15}y+945x^{14}y^2-1971x^{13}y^3+3384x^{12}y^4-4716x^{11}y^5+5625x^{10}y^6-5664x^9y^7+5026x^8y^8-3900x^7y^9+2810x^6y^{10}-1840x^5y^{11}+1155x^4y^{12}-625x^3y^{13}+300x^2y^{14}-100xy^{15}+25y^{16})
(x^4-3x^3y+4x^2y^2-2xy^3+y^4)(x^2-xy+y^2))$

\textit{Now, consider the following unfolding $F=(f_t(x,y),t)$ of $f$}

\begin{center}
$f_t(x,y):=(x^3,y^5,x^2-xy+y^2+tx^2)$
\end{center}

\textit{Notice that $f$ is homogeneous and the unfolding $F$ only adds terms of same degree, then $F$ is topologically trivial. The presentation matrix of the push forward ${f_t}_{\ast}(\mathcal{O}_2)$ is given by}

\begin{center}
{$$ \footnotesize  \left( \begin{array}{ccccccccccccccc}

-Z &   0 &    1+t & 0 &    -1 &    0 &   1 &   0 &    0 &   0 &    0 &    0 &    0 &    0 &  0 \\
    
gX & -Z &    0&   0&    0&    -1&   0&    1&   0&   0&    0&    0&    0&    0&  0 \\
   
0&    gX& -Z &   X&    0&    0&   0&    0&    1&  0&    0&    0&    0&    0&  0\\
     
0&    0&    0&   -Z&        0&       1+t&0&    -1&    0&   1&   0&    0&    0&    0&  0\\
     
0&    0&    0&   gX&    -Z&       0&   0&    0&    -1&   0&    1&   0&    0&    0& 0\\ 
     
0&    0&    0&   0&       gX&    -Z&   X&    0&    0&   0&    0&    1&   0&    0& 0\\ 
     
0&    0&    0&   0&        0&       0&   -Z&    0&   1+t&   0&    -1&    0&    1&   0&  0\\
    
0&    0&    0&   0&        0&      0&   gX&    -Z&    0&   0&    0&    -1&    0&    1&  0\\ 
     
0&    0&    0&   0&        0&   0&   0&    gX&   -Z&   X&    0&    0&    0&    0&  1\\
    
Y&   0&    0&   0&        0&    0&   0&    0&    0&   -Z&    0&    1+t&    0&    -1&  0\\
     
0&    Y&   0&   0&        0&    0&   0&    0&    0&   gX&   -Z&    0&    0&    0& -1\\
     
0&    0&    Y&  0&        0&    0&   0&    0&    0&   0&    gX&   -Z&    X&    0&   0\\ 
     
0&    Y&    0&   Y&       0&    0&   0&    0&    0&   0&    0&    0&    -Z&    0&  1+t\\
  
0&    0&    Y&   0&       Y&   0&   0&    0&    0&   0&    0&    0&    gX&   -Z&  0\\

-XY&    0&    0&   0&       0&   Y&   0&    0&    0&   0&    0&    0&    0&   gX&  -Z\\

\end{array} \right)$$}
\end{center}

\noindent \textit{where $g=(1+t)$, then}  

\begin{center}
$f_t(\mathbb{C}^2)=V(F_0({f_t}_{\ast}(\mathcal{O}_2)))=V(Y^6+h_1XY^5Z+h_2X^5Y^3+h_3X^2Y^4Z^2+h_4X^6Y^2Z+h_5X^3Y^3Z^3-3Y^4Z^5+h_6X^{10}+h_7X^7YZ^2+h_8X^4Y^2Z^4+h_9XY^3Z^6+h_{10}X^8Z^3+h_{11}X^5YZ^5+h_{12}X^2Y^2Z^7+h_{13}X^6Z^6+h_{14}X^3YZ^8+3Y^2Z^{10}+h_{15}X^4Z^9+h_{16}XYZ^{11}+h_{17}X^2Z^{12}-Z^{15})$
\end{center}

\noindent \textit{where}

\begin{flushleft}
$h_1(x,y)=30t+15, \ \ $ $h_2(x,y)=15t^7-35t^6-147t^5-135t^4-20t^3+30t^2+15t+2, \  $ $h_3(x,y)=30t^3+315t^2+315t+90,$\\
$h_4(x,y)=15t^9-90t^8-735t^7-1785t^6-2070t^5-1125t^4-75t^3+225t^2+105t+15,$\\
$h_5(x,y)=375t^4+1600t^3+2145t^2+1215t+245,$\\
$h_6(x,y)=t^{15}+15t^{14}+105t^{13}+455t^{12}+1365t^{11}+3003t^{10}+5005t^9
+6435t^8+6435t^7+5005t^6+3003t^5+1365t^4+455t^3$
$+105t^2+15t+1,$\\
$h_7(x,y)=-90t^{10}-750t^9-2745t^8-5760t^7-7560t^6-6300t^5-3150t^4-720t^3+90t^2+90t+15,$\\
$h_8(x,y)=135t^6+1260t^5+3900t^4+5775t^3+4500t^2+1800t+300, \ \ \ \ \ $ $h_9(x,y)=15t-80,$\\
$h_{10}(x,y)=-5t^{12}-60t^{11}-330t^{10}-1100t^9-2475t^8-3960t^7-4620t^6-3960t^5-2475t^4-1100t^3-330t^2-60t-5,$\\
$h_{11}(x,y)=135t^7+1035t^6+3222t^5+5310t^4+4995t^3+2655t^2+720t+72, \ \ \ $ $h_{12}(x,y)=90t^3+45t^2-330t-345,$\\
$h_{13}(x,y)=10t^9+90t^8+360t^7+840t^6+1260t^5+1260t^4+840t^3+360t^2+90t+10,$\\
$h_{14}(x,y)=-225t^3-720t^2-765t-270, \ \ \ $ $h_{15}(x,y)=-10t^6-60t^5-150t^4-200t^3-150t^2-60t-10,$\\
$h_{16}(x,y)=-45t-60, \ \ \ \ \ \ \ \ $ $h_{17}(x,y)=5t^3+15t^2+15t+5$.
\end{flushleft}

\textit{Consider the plane $H$ defined by $X-Z=0$ which in this example is generic to $f$. Then, the family of plane curves $(f_t(\mathbb{C}^2)\cap H)=(Y_t,0)$ is defined in the coordinates $U$ and $W$ of $(\mathbb{C}^2,0)\simeq H$ by}

\begin{center}
$(Y_t,0)=V(W^6+h_1U^2W^5+h_2U^5W^3+h_3U^4W^4+h_4U^7W^2+h_5U^6W^3-3U^5W^4+h_6U^{10}+h_7U^9W+h_8U^8W^2+h_9U^7W^3+h_{10}U^{11}+h_{11}U^{10}W+h_{12}U^9W^2+h_{13}U^{12}+h_{14}U^{11}W+3U^{10}W^2+h_{15}U^{13}+h_{16}U^{12}W+h_{17}U^{14}-U^{15})$
\end{center}

\textit{Then, $\mu(Y_0,0)=47$ and $\mu(Y_t,0)=45$ for $t\neq 0$, hence $F$ is not Whitney equisingular by Theorem} \rm\ref{15}. \textit{Notice that $\mu(\tilde{Y}_t,0)=1$ for all $t$. Moreover, by Lemma} \rm\ref{5}, \textit{we have that $m_0(f(D(f)))=23$ and $m_0(f_t(D(f_t)))=22$ for $t \neq 0$.}

\textit{Notice also that the pair of irreducible components $D(f)^1 :=V(x-\alpha y)$ and $D(f)^2 :=V(x-\beta y)$, where $\alpha$ and $\beta$ are complex numbers such that $(x-\alpha y)(x-\beta y)=x^2-xy+y^2$, is a pair of identification components of $D(f)$.}

\textit{One of the main ideas of this example is that the map germ $f$ was constructed in such way that the image of the identification components $D(f)^1$ and $D(f)^2$ have multiplicity $3$, while all the other irreducible components of $f(D(f))$ have multiplicity $2$. When we take an appropriate deformation $f_t$ of $f$, all the irreducible components of $f_t(D(f_t))$ now have multiplicity $2$, for $t \neq 0$.} 
\end{example}

We now present the following counterexample to the conjecture in the corank $1$ case.

\begin{example}\label{oterceirocontraex}
\noindent Let $f$ be the map germ defined by $f(x,y)=(x,y^4,x^5y-5x^3y^3+4xy^5+y^6)$. As in the previous examples, it is not difficult to check, with the help of a computer, that $f$ is finitely determined. \textit{Now, consider the following unfolding $F=(f_t(x,y),t)$ of $f$}

\begin{center}
$f_t(x,y)=(x,y^4,x^5y-5x^3y^3+4xy^5+y^6+ty^7)$
\end{center}

\textit{The map germ $f$ is homogeneous and its unfolding $F$ only adds terms of same degree, then $F$ is topologically trivial}. \textit{The presentation matrix of the push forward ${f_t}_{\ast}(\mathcal{O}_2)$ is given by}

{$$  \left( \begin{array}{cccc}

-Z & X^5+4XY & Y & -5X^3+tY \\

-5X^3Y+tY^2 & -Z & X^5+4XY & Y \\

Y^2 & -5X^3Y+tY^2 & -Z & X^5+4XY \\

X^5Y+4XY^2 & Y^2 & -5X^3Y+tY^2 & -Z 

\end{array} \right)$$}

\textit{Then}

\begin{center}
$f_t(D(f_t))=V(YZ^2-Y^4+16X^2Y^2Z+8tXY^4+t^2Y^3Z-40X^4Y^3-10tX^3Y^2Z+33X^6YZ+2tX^5Y^3-10X^8Y^2+X^{10}Z,$\\
$Z^3-Y^3Z-16X^2Y^4-8tXY^3Z-t^2Y^5+40X^4Y^2Z+10tX^3Y^4-33X^6Y^3-2tX^5Y^2Z+10X^8YZ-X^{10}Y^2,$\\
$8XY^2Z+tYZ^2+tY^4-5X^3Z^2+59X^3Y^3-4t^2XY^4+2X^5YZ+40tX^4Y^3-52X^7Y^2-t2X^5Y^3+10tX^8Y^2-13X^{11}Y+X^{15},$\\
$8XY^4+2tY^3Z-74X^3Y^2Z-48tX^2Y^4-4t^2XY^3Z+X^5Z^2+241X^5Y^3+t^3Y^5+40tX^4Y^2Z-15t^2X^3Y^4-132X^7YZ+59tX^6Y^3-45X^9Y^2-4X^{11}Z-tX^{10}Y^2+5X^{13}Y)$
\end{center}

\textit{We can check that $m_0(f(D(f)))=9$ and $m_0(f_t(D(f_t)))=8$ for $t \neq 0$. Then $F$ is not Whitney equisingular by Theorem} \rm\ref{15}. \textit{Also, as $\mu(D(f_t))=196$, $C(f_t)=15$ and $T(f_t)=20$ for all $t$, by Remark} \rm\ref{31} \textit{it follows that $J(f)=39$ and $J(f_t)=38$ for $t\neq 0$. Then the constancy of the invariant $\mu(D(f_t))$ does not imply the constancy of the invariant $J(f_t)$.}

\textit{In this example, the curve $D(f)^1$ defined by $x=0$ is a fold component of $D(f)$. The image by $f$ of the component $D(f)^1$ has multiplicity $2$, while all the other irreducible components of $f(D(f))$ have multiplicity $1$. See Figure \ref{figure11} that shows five real irreducible components of $D(f)$ denoted by $D(f)^2=V(x-y), \ D(f)^3=V(x+y), D(f)^4=V(x+2y), \ D(f)^5=V(x-2y)$ (the others are not real components and do not appear in the figure). We create the appropriate deformation $f_t$ of $f$ so that all the irreducible components of $f_t(D(f_t))$ have multiplicity $1$, for $t \neq 0$.}

\begin{figure}[h]
\centering
\includegraphics[scale=0.35]{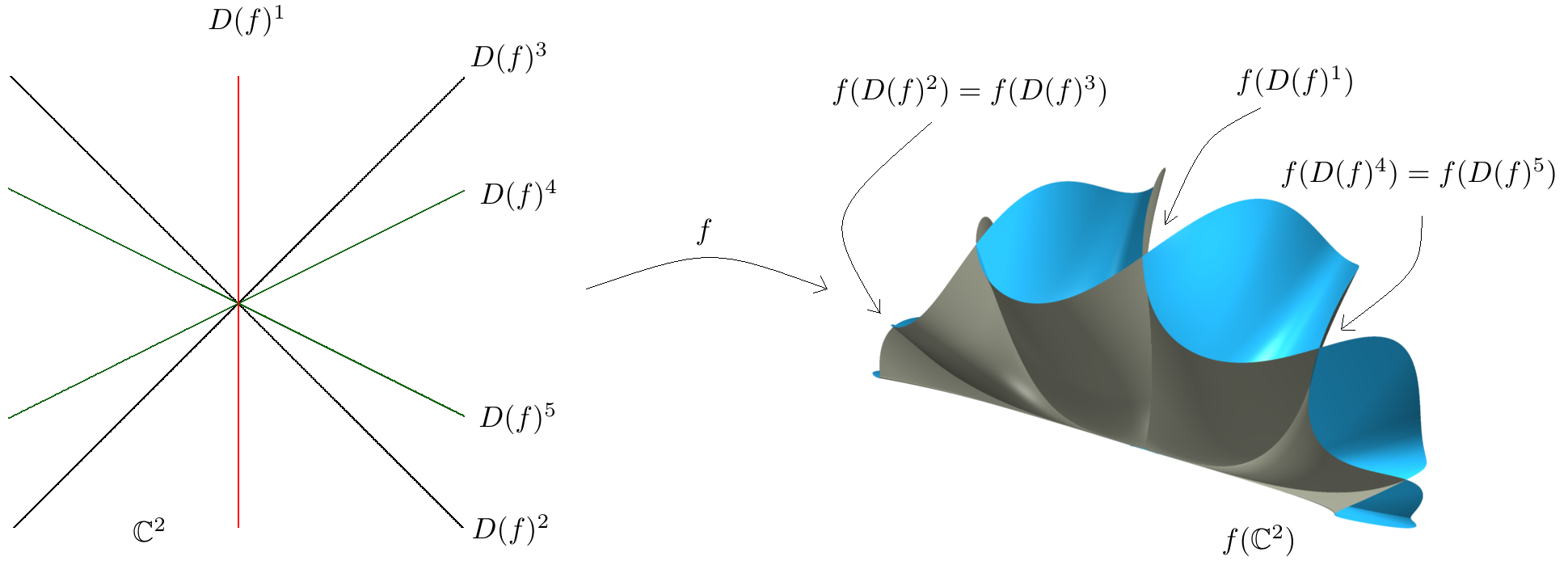}  
\caption{Five real components of $D(f)$ and the real image of $f(\mathbb{C}^2)$.}\label{figure11}
\end{figure}

\end{example}

We conclude this section with Table \ref{tabela1} which shows other counterexamples for the conjecture. The reader can check the details of each example following the same routine performed in the examples described above.

\begin{table}[!h]
  \centering 
  
  \begin{tabular}{lcccc}
\hline
$\begin{array}{c}\text{\footnotesize{Family}}\\ \text{\footnotesize{of map germs}} \end{array}$ & $\mu(\tilde{Y}_0,0)$  & $\mu(\tilde{Y}_t,0)$  & $m_0(f(D(f)))$  & $m_0(f_t(D(f_t)))$   \\
\hline \hline
 Corank $1$ &   &   &   &    \\
\hline  
& & & &\\
$(x,y^4,x^5y+xy^5+y^6+ty^7)$ &  0 & 0 & 9 & 8 \\
  & & & &\\ 
$(x,y^4,x^9y+xy^9+y^{10}+ty^{11})$ & 0 & 0 &  15 & 14\\
& & & &\\
$(x,y^4,x^{13}y+xy^{13}+y^{14}+ty^{15})$ & 0 & 0 & 21& 20\\
& & & &\\
$(x,y^4,x^{17}y+xy^{17}+y^{18}+ty^{19})$ & 0 & 0 & 27& 26\\
& & & &\\
$(x,y^6,x^{7}y+xy^{7}+y^{8}+ty^{9})$ & 0 & 0 & 20& 18\\
& & & &\\
$(x,y^6,x^{13}y+xy^{13}+y^{14}+ty^{15})$ & 0 & 0 & 35& 33\\
& & & &\\
\hline
 Corank $2$ &   &   &   &    \\
\hline
& & & &\\
$(x^2+txy,x^2y+xy^2+y^3,x^5+y^5)$ &  2 & 1 & 22 & 22 \\
  & & & &\\ 
$(x^3,y^5,x^2-xy+y^2+tx^2)$ & 1 & 1 &  23 & 22\\
& & & &\\
$\begin{array}{l}(x^n+ty^n,y^m,(x+y)^k) \\ _{(with \ \  2\leq n<m<k \ and \  n,m,k \  coprimes \  in \ pairs)} \\ _{ \ \ \ \ and  \ \ \ \ \ d=nmk-n-m-k+2} \end{array}$ & ${(n-1)(m-1)}$ & ${(n-1)^2}$ & ${{\dfrac{dn}{2}}}$  & ${\dfrac{dn}{2}}$ \\
& & & &\\
\end{tabular}
\caption{Counterexamples to the conjecture \ref{conjecture1}}\label{tabela1}
\end{table}

\section{Some formulas}\label{section6}

$ \ \ \ \ $ All examples of the previous section are deformations of homogeneous map germs. Inspired by them, we present in this section some formulas for the invariants $m_0(f(D(f)))$, $\mu_1(f(\mathbb{C}^2))$ and $J(f)$ in the case which $f$ is finitely determined, homogeneous and has corank $1$. We need the following lemma.

\begin{lemma}
Let $\alpha:\mathbb{C}\rightarrow \mathbb{C}^n$ be a map defined by 

\begin{center}
$\alpha(t)=(a_1t^{m_1},a_2t^{m_2}, \cdots, a_nt^{m_n})$, 
\end{center}

\noindent Then $\alpha$ is generically $d$-to-$1$ over the image $\alpha(\mathbb{C})$, where $d=gcd(m_1,m_2,\cdots,m_n)$, the greatest commom divisor of $m_1,m_2, \cdots, m_n$ .
\end{lemma}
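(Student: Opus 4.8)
The plan is to compute the fibre $\alpha^{-1}(\alpha(t))$ explicitly for a generic $t$ and to show it has exactly $d$ points. Since $\alpha(\mathbb{C}\setminus\{0\})$ is a dense subset of the image $\alpha(\mathbb{C})$, it suffices to restrict attention to $t\neq 0$; and since a coordinate with $a_i=0$ vanishes identically and imposes no constraint, I would state at the outset the nondegeneracy hypothesis $a_i\neq 0$ for all $i$ (which is precisely the situation for the parametrizations of the branches of $D(f)$ used in Section \ref{section6}), so that each coordinate genuinely contributes a condition.

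First I would observe that $\alpha(s)=\alpha(t)$ holds if and only if $a_i s^{m_i}=a_i t^{m_i}$ for every $i$, that is $s^{m_i}=t^{m_i}$ for all $i$. Fix $t\neq 0$ and set $\zeta=s/t$; then $s=0$ cannot lie in the fibre, and the system becomes $\zeta^{m_i}=1$ for $i=1,\dots,n$. Thus every element of $\alpha^{-1}(\alpha(t))$ has the form $s=\zeta t$ with $\zeta$ a simultaneous root of unity for all the exponents $m_i$, and conversely every such $\zeta$ gives a point of the fibre.

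The key step is the elementary number-theoretic fact that $\zeta^{m_i}=1$ for all $i$ if and only if $\zeta^{d}=1$, where $d=\gcd(m_1,\dots,m_n)$. Indeed, the common solutions are exactly the complex numbers whose multiplicative order divides every $m_i$, hence divides their greatest common divisor $d$; conversely any $d$-th root of unity satisfies $\zeta^{m_i}=(\zeta^{d})^{m_i/d}=1$. Hence the admissible $\zeta$ form the group $\mu_d$ of $d$-th roots of unity, which has exactly $d$ elements, and since $t\neq 0$ the values $s=\zeta t$ are pairwise distinct. Therefore $\alpha^{-1}(\alpha(t))$ has precisely $d$ points for every $t\neq 0$, and as these $t$ sweep out a dense open set whose image is dense in $\alpha(\mathbb{C})$, the map $\alpha$ is generically $d$-to-$1$ onto its image.

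This lemma is genuinely elementary, so there is no serious obstacle: the entire content is the roots-of-unity count of the previous paragraph. The only point requiring care is the bookkeeping when some $a_i$ vanishes, which is why I would make the nondegeneracy assumption explicit; under it each exponent contributes the constraint $s^{m_i}=t^{m_i}$ and the gcd is computed over the full list $m_1,\dots,m_n$ exactly as stated.
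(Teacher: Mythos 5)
Your proof is correct, and it takes a slightly different route from the paper's. The paper factors $\alpha=\gamma\circ\beta$ with $\beta(t)=t^{d}$ and $\gamma(z)=(a_1z^{q_1},\dots,a_nz^{q_n})$ where $m_i=dq_i$, observes that $\beta$ is generically $d$-to-$1$, and relies on the (unproved, "not difficult to see") assertion that $\gcd(q_1,\dots,q_n)=1$ forces $\gamma$ to be generically $1$-to-$1$. You instead compute the fibre $\alpha^{-1}(\alpha(t))$ directly for $t\neq 0$ and identify it with $\{\zeta t:\zeta^{d}=1\}$ via the fact that $\mu_{m_1}\cap\dots\cap\mu_{m_n}=\mu_{d}$; this single computation subsumes the paper's base case and so is the more self-contained argument. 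You also make explicit the hypothesis $a_i\neq 0$, which both proofs genuinely need (if some $a_i=0$ that coordinate imposes no constraint and the relevant gcd is taken over the remaining exponents, as the example $\alpha(t)=(0\cdot t^{3},t^{2})$ shows); the paper leaves this tacit, though it is satisfied in the application to the branches of $D(f)$. The paper's factorization has the mild advantage of isolating the degree-$d$ covering $t\mapsto t^{d}$ as the geometric source of the multiplicity, but nothing in the rest of the paper depends on that structure.
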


\begin{proof}
It is not difficult to see that if $gcd(m_1,m_2,\cdots,m_n)=1$ then $\alpha$ is generically $1$-to-$1$ over its image. When $gcd(m_1,m_2,\cdots,m_n)\neq 1$, write $m_1=dq_1, \cdots, m_n=dq_n$ and consider the maps $\beta: \mathbb{C} \rightarrow \mathbb{C}$ and $\gamma: \mathbb{C} \rightarrow \mathbb{C}^n$ defined by

\begin{center}
$\beta(t)=t^d \ \ \ \ \ \ \ \ $ and $ \ \ \ \ \ \ \ \gamma(z)=(a_1z^{q_1}, \cdots, a_nz^{q_n})$.
\end{center}

\noindent Then $\alpha=\gamma \circ \beta$, $\beta$ is generically $d$-to-$1$, hence $\alpha$ is generically $d$-to-$1$.\end{proof}

\begin{proposition}\label{propdasformulas}
Let $f:(\mathbb{C}^2,0)\rightarrow (\mathbb{C}^3,0)$ be a homogeneous finitely determined map germ of corank $1$. Write $f$ in the form $f(x,y)=(x,p(x,y),q(x,y))$ and denote by $n$ and $m$ the degrees of $p$ and $q$, respectively, with $2\leq n \leq m$.\\

\noindent (a) $D(f)$ is the germ of a homogeneous curve with $d$ smooth irreducible components, where $d=nm-n-m+1$ and 

\begin{center}
$D(f)= V( \displaystyle {  \prod_{i=1}^{d}}(x-\alpha_iy))$
\end{center}

\noindent where $\alpha_i \in \mathbb{C}$.

\noindent (b) $d$ is odd if and only if $n$ and $m$ are both even.

\noindent (c) If $n$ and $m$ are both even, then $gcd(n,m)=2$, otherwise $f$ is not finitely determined. In this case, $D(f)$ has $d-1$ identification components, and $D(f)^j=V(x)$ is the unique fold component of $D(f)$ and the following hold:

\begin{center}
$m_0(f(D(f)))=\dfrac{nm-m}{2} \ \ \ \ \ \ \ \ \ \ \ \ \ $  $  \ \ \ \ \ \ \ \ \ \ \ \mu_1(f(\mathbb{C}^2))=nm-m$\\
$ \ \ $\\
$J(f)=\dfrac{m^2n+mn^2-m^2-7mn-n^2+6m+7n-6}{2}$
\end{center}

\noindent (d) If $n$ and $m$ are not both even, then each of the $d$ components of $D(f)$ is an identification component. In this case, we have that

\begin{center}
$m_0(f(D(f)))=\dfrac{nm-n-m+1}{2} \ \ \ \ \ \ \ \ \ $  $ \ \ \ \ \ \ \ \mu_1(f(\mathbb{C}^2))=nm-n-m+1$\\
$ \ \ $\\
$J(f)=\dfrac{m^2n+mn^2-m^2-7mn-n^2+6m+6n-5}{2}$
\end{center}

\end{proposition}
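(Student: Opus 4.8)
The plan is to read off $D(f)$ from the corank-one description of the double point space, classify its irreducible components as fold or identification type by restricting $f$ to each component, and then convert the resulting branch-multiplicity count into the stated invariants via Lemma \ref{5}, Theorem \ref{mondformulas} and Remark \ref{31}.

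For part (a) I would start from the fact that, since $f=(x,p,q)$ has corank one, the lifting of the double point space is the complete intersection curve $D^2(f)=V(P,Q)\subset\mathbb{C}^3_{(x,y,y')}$, where $P$ and $Q$ are the divided differences $P=(p(x,y)-p(x,y'))/(y-y')$ and $Q=(q(x,y)-q(x,y'))/(y-y')$, homogeneous of degrees $n-1$ and $m-1$. The projection $p\colon D^2(f)\to\mathbb{C}^2$ forgetting $y'$ has $F_0(p_\ast\mathcal{O}_{D^2(f)})$ generated by $\mathrm{Res}_{y'}(P,Q)$, a homogeneous form in $(x,y)$, so $D(f)$ is a homogeneous plane curve. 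Finite determinacy and Theorem \ref{criterio} force $D(f)$ to be reduced, hence its defining form is a product of pairwise distinct linear forms and $D(f)$ is a union of distinct smooth lines through the origin. To count them I invoke Theorem \ref{mondformulas} with weights $\omega_1=\omega_2=1$ and degrees $1,n,m$, giving $\mu(D(f))=(nm-n-m)^2$; since a reduced union of $k$ concurrent lines has Milnor number $(k-1)^2$, this yields $k=nm-n-m+1=d$. Finally, after a generic shear $(x,y)\mapsto(x,y+cx)$ (which preserves homogeneity, the normal form $f=(x,p,q)$, finite determinacy and every invariant involved) no component is the line $\{y=0\}$, so each component may be written $x-\alpha_i y$.

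Part (b) is immediate from $d=(n-1)(m-1)$, which is odd exactly when both factors are odd, i.e. when $n,m$ are both even. The core of (c) and (d) is the fold/identification dichotomy, decided by restricting $f$ to each line $L$. For $L=\{x-\alpha y=0\}$ with $\alpha\neq 0$, parametrizing by $t\mapsto(\alpha t,t)$ gives $f|_L(t)=(\alpha t,\,p(\alpha,1)\,t^n,\,q(\alpha,1)\,t^m)$, which by the lemma preceding this proposition is generically $\gcd(1,n,m)=1$ to one, so every such $L$ is an identification component. For $L=V(x)$ we get $f|_{V(x)}(y)=(0,p(0,y),q(0,y))$, i.e. $(0,ay^n,by^m)$ with $a,b\neq 0$, which is generically $\gcd(n,m)$-to-one; hence $V(x)$ is the only line that can be a fold component, its type being governed by $\gcd(n,m)$. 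Finite determinacy enters decisively here: such a germ has only isolated triple points, so $D(f)$ cannot contain a line on which $f$ is generically $\ge 3$-to-one. Thus if $n,m$ are both even then $\gcd(n,m)$ is even and $\ge 2$, so $V(x)\subset D(f)$ and finite determinacy forces $\gcd(n,m)=2$, making $V(x)$ a fold and the remaining $d-1$ lines identification components in $(d-1)/2$ pairs; if $n,m$ are not both even then $\gcd(n,m)\neq 2$, a $\gcd\ge 3$ is excluded, and a multiplicity comparison (the image of $V(x)$ would have multiplicity $n\ge 2$ while its partner's image has multiplicity $1$) shows $V(x)\notin D(f)$, so all $d$ lines are identification components.

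The multiplicity $m_0(f(D(f)))$ is then the sum of the multiplicities of the branches of the image curve. Each identification pair contributes a single smooth branch of multiplicity $1$ (the first coordinate has degree $1$), while in case (c) the fold branch, reparametrized by $s=y^2$ as $(0,as^{n/2},bs^{m/2})$, contributes $\min(n/2,m/2)=n/2$. Summing gives $m_0(f(D(f)))=(d-1)/2+n/2=(nm-m)/2$ in case (c) and $m_0(f(D(f)))=d/2=(nm-n-m+1)/2$ in case (d). In both cases a generic $H$ pulls back to a smooth curve $\tilde Y_0$ (its defining function carries the linear term of the first coordinate), so $\mu(\tilde Y_0,0)=0$ and Lemma \ref{5}(b) gives $\mu_1(f(\mathbb{C}^2))=2\,m_0(f(D(f)))$, namely $nm-m$ and $nm-n-m+1$. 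For $J(f)$ I rearrange Remark \ref{31} into $2J(f)=\mu(D(f))+2\,m_0(f(D(f)))-1-C(f)-6T(f)$, substitute $C(f)=(n-1)(m-1)$, $\mu(D(f))=(nm-n-m)^2$ and $T(f)=\tfrac{1}{6}d(nm-2n-2m+4)$ from Theorem \ref{mondformulas} together with the two values of $m_0(f(D(f)))$, and simplify; the cubic terms survive after the $(nm)^2$ cancellation and give exactly the two stated formulas (whose difference is $(n-1)/2$, reflecting the different $m_0$). I expect the genuine obstacles to be structural rather than computational: justifying the shear so that (a) holds verbatim (a homogeneous finitely determined germ really can have $V(y)$ as a component before normalization), and the finite-determinacy arguments pinning $\gcd(n,m)=2$ in the even case and excluding $V(x)$ from $D(f)$ otherwise; the only subtle bookkeeping is that the fold branch contributes $n/2$ rather than $1$ to $m_0(f(D(f)))$.
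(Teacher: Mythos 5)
Your overall route coincides with the paper's: establish homogeneity and the degree of $D(f)$, classify each line $V(x-\alpha_j y)$ as identification or fold via the generic degree of $f$ restricted to a parametrization, and convert the branch-multiplicity count into $m_0(f(D(f)))$, $\mu_1(f(\mathbb{C}^2))$ and $J(f)$ through Lemma \ref{5}, Remark \ref{31} and Theorem \ref{mondformulas}. Two points, however, are genuine gaps rather than bookkeeping. First, part (a) asserts that no component of $D(f)$ is $V(y)$ for the \emph{given} $f$, and your generic shear does not prove this: it replaces $f$ by a different (merely $\mathcal{A}$-equivalent) germ, and your stated suspicion that $V(y)$ can occur before normalization is in fact wrong. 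The paper closes this directly: if $V(y)\subset D(f)$, then $t\mapsto f(t,0)=(t,p(t,0),q(t,0))$ is generically $1$-to-$1$, so $V(y)$ would be an identification component and would require a partner component with the same image; but every other component has the form $V(x-\alpha y)$ and $f(V(y))\cap f(V(x-\alpha y))=\{0\}$, a contradiction. No coordinate change is needed, and the statement holds verbatim.

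Second, your analysis of $V(x)$ assumes $p(0,1)\neq 0$ \emph{and} $q(0,1)\neq 0$; finiteness of $f$ only guarantees that one of them is nonzero. If, say, $q(0,y)\equiv 0$, then $f|_{V(x)}(t)=(0,at^n,0)$ is generically $n$-to-$1$, not $\gcd(n,m)$-to-$1$. In case (c) the paper treats this in separate subcases: finite determinacy forces $n=2$, $f$ reduces to the normal form $(x,y^2,yh(x,y^2))$, and one still obtains $\gcd(n,m)=2$ and $m_0(f(D(f)^1))=n/2$. In case (d) the degenerate possibility $n=2$, $m$ odd, $q(0,y)\equiv 0$, with $V(x)$ a fold component, is not excluded by your $\gcd$/multiplicity comparison; the paper rules it out by parity: identification components come in pairs, $d$ is even, and there is at most one candidate fold component, hence there are none. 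Your direct exclusion of $V(x)$ when $\gcd(n,m)=1$ and both coefficients are nonzero is a legitimate alternative to that parity argument in its subcase, and the remaining computations ($\mu(\tilde Y_0,0)=0$, the fold branch contributing $n/2$, and the substitution into Remark \ref{31} for $J(f)$) match the paper.
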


\begin{proof}
(a) The proof that $D(f)$ is homogeneous and $d=nm-n-m+1$ follows from (\cite{mond3}, Proposition $1.15$). We have that $D(f)=V(\lambda(x,y))$, where $\lambda(x,y)$ is a homogeneous polynomial of degree $d$. In this way, we can write

\begin{center}
$\lambda(x,y)= \displaystyle {  \prod_{i=1}^{d}}(\beta_i x-\gamma_i y)$, $ \ \ \ \ \ $ with $ \ \ \ \ \beta_i, \gamma_i \in \mathbb{C}$
\end{center} 

Let $f:U\rightarrow \mathbb{C}^3$ be a representative of $f$ and suppose that $D(f)^1=V(y)$ is an irreducible component of $D(f)$. Consider the map $n_1: U_1 \rightarrow U$, defined by $n_j(t)=(t,0)$, where $U_1$ is a small enough open neighborhood of $0$ in $\mathbb{C}$.  The map $f \circ n_1$ is generically $1$-to-$1$, then $D(f)^1$ is an identification component of $D(f)$. Any other component $D(f)^i$ of $D(f)$ is given by $D(f)^i=V(x-\alpha_iy)$, where $\alpha_i=\gamma_i/\beta_i$, then $f(D(f)^1)\cap f(D(f)^i)= \lbrace 0 \rbrace$, a contradiction. Then we can rewrite $\lambda$ in the form 

\begin{center}
$\lambda = \displaystyle {  \prod_{i=1}^{d}}(x-\alpha_i y)$, 
\end{center}

\noindent since $\beta_i \neq 0$, for all $i$.\\

(b) Write $n=2a$ and $m=2b$, then $nm-n-m+1=(2a)(2b)-(2a)-(2b)+1$ which is odd. Suppose now that $n=2a$ and $m=2b+1$, then $(2a)(2b+1)-(2a)-(2b+1)+1=4ab-2b$ which is even. Finally, suppose that $n=2a+1$ and $m=2b+1$, then $(2a+1)(2b+1)-(2a+1)-(2b+1)+1=4ab-2$ which is even.\\

(c) Consider the parametrization $n_j:U_j\rightarrow \mathbb{C}^2$ of each irreducible component $D(f)^j=V(x-\alpha_jy)$ of $D(f)$, defined by $n_j(t)=(\alpha_jt,t)$. If $\alpha_j \neq 0$, $f \circ n_j$ is generically $1$-to-$1$, then $D(f)^j$ is an identification component of $D(f)$. Since $n$ and $m$ are both even, $d$ is odd, then $D(f)$ has exactly
 $d-1$ identification components and $1$ fold component, denoted by $D(f)^1$, which is necessarily given by $D(f)^1=V(x)$. Write  $f$ in the form

\begin{center}
$f(x,y)=(x,a_0x^{n}+a_1x^{n-1}y+ \cdots + a_ny^n,b_0x^m+b_1x^{m-1}y+\cdots+ b_my^{m})$
\end{center}

Notice that $a_n \neq 0$ or $b_m \neq 0$, otherwise $f$ is not finite. We have three cases:

\begin{flushleft}
\textit{(c.1)} $f \circ n_1(t)=(a_nt^n,b_mt^m)$, if $a_n \neq 0$ and $b_m \neq 0$\\ 
\textit{(c.2)} $f \circ n_1(t)=(a_nt^n,0)$, if $a_n \neq 0$ and $b_m=0$\\
\textit{(c.3)} $f \circ n_1(t)=(0,b_mt^m)$, if $a_n=0$ and $b_m \neq 0$ 
\end{flushleft}

Let's look at each case.\\

\noindent \textit{Case (c.1)}: Let be $c=gcd(n,m)$, the map $f \circ n_1$ is generically $c-$to$-1$, hence $c=2$, otherwise $f$ is not finitely determined. Since $n\leq m$, it follows that $m_0(f(D(f)^1)))=\dfrac{n}{2}$.\\

\noindent \textit{Case (c.2)}: We have that $f \circ n_1(t)=(a_nt^n,0)$ is generically $2-$to$-1$, hence $n=2$. We can make coordinate changes at the source and target (see \cite{mond4}) such that $f$ can be written in the form 

\begin{center}
$f(x,y)=(x,y^2,yh(x,y^2))$
\end{center}

\noindent where $h(x,y^2)$ is a homogeneous polynomial of degree $m-1$. Since $2=n\leq m$, it follows that $m_0(f(D(f)^1)))=\dfrac{n}{2}$.\\

\noindent \textit{Case (c.3)}: This case is analogous to the previous case.\\

In any case we have that $gcd(n,m)=2$ and $m_0(f(D(f)^1)))=\dfrac{n}{2}$. Then

\begin{center}
$m_0(f(D(f)))=\displaystyle \left( \dfrac{d-1}{2} \right)+\dfrac{n}{2}=\dfrac{nm-n}{2}$
\end{center}

Using the formulas in Lemma \ref{5}, Remark \ref{31} and Theorem \ref{mondformulas}, we obtain the corresponding formulas for $\mu_1(f(\mathbb{C}^2))$ and $J(f)$.\\

(d) Consider again the parametrization $n_j:U_j\rightarrow \mathbb{C}^2$ of each component $D(f)^j=V(x-\alpha_jy)$ of $D(f)$, defined by $n_j(t)=(\alpha_jt,t)$. If $\alpha_j \neq 0$, $f \circ n_j$ is generically $1$-to-$1$, hence $D(f)^j$ is an identification component of $D(f)$. If there exists some fold component of $D(f)$, the unique possibility is when $\alpha_j=0$, then $D(f)$ can have only one fold component or each component of $D(f)$ is an identification component. As $n$ and $m$ are not both even, $d$ must be even. Since the number of identification components of $D(f)$ is always even, the number of fold components of $D(f)$ is even, so each component of $D(f)$ is an identification component. Since the image of each identification component is a smooth curve in $\mathbb{C}^3$, we have that

\begin{center}
$m_0(f(D(f)))=\dfrac{nm-n-m+1}{2}=\dfrac{d}{2}$
\end{center}

Again, using the formulas in Lemma \ref{5}, Remark \ref{31} and Theorem \ref{mondformulas}, we obtain the corresponding formulas for $\mu_1(f(\mathbb{C}^2))$ and $J(f)$.\end{proof}

\begin{proposition}
Let $f:(\mathbb{C}^2,0)\rightarrow (\mathbb{C}^3,0)$ be the map germ defined by $f(x,y)=(x^n,y^m,(x+y)^k)$, where $n,m$ and $k$ are coprime in pairs. If $n<m<k$, then $D(f)$ has $d=nmk-n-m-k+2$ smooth irreducible components and each component of $D(f)$ is an identification component. Furthermore 

\begin{center}
$m_0(f(D(f)))=\dfrac{dn}{2} \ \ \ \ \ \ \ \ \ \ $ and $ \ \ \ \ \ \ \ \ \ \ \mu_1(f(\mathbb{C}^2))=(n-1)(m-1)+dn$
\end{center}

\end{proposition}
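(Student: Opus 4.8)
The plan is to leverage the quasi-homogeneity of $f$ together with Mond's formulas, reducing the global statement to a count of lines and a multiplicity reading off monomial parametrizations. First I would record that $f$ is quasi-homogeneous with weights $\omega_1=\omega_2=1$ and degrees $d_1=n$, $d_2=m$, $d_3=k$, so that in the notation of Theorem~\ref{mondformulas} one has $\epsilon=n+m+k-2$, $\delta=nmk$, and $\delta-\epsilon=nmk-n-m-k+2=d$. Theorem~\ref{mondformulas} then gives $\mu(D(f))=(\delta-\epsilon-1)^2=(d-1)^2$. Since $f$ is finitely determined, $D(f)$ is reduced by Theorem~\ref{criterio}, and since $f$ is weighted homogeneous with unit weights, $D(f)$ is a homogeneous plane curve, hence a union of $r$ pairwise distinct lines through the origin with Milnor number $(r-1)^2$; comparing with $(d-1)^2$ forces $r=d$, so $D(f)$ has exactly $d$ smooth irreducible components.

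Next I would show that none of the distinguished lines $V(x)$, $V(y)$, $V(x+y)$ is a component of $D(f)$: for a generic point on each such line, solving $f(P)=f(P')$ forces a root of unity $\zeta$ with $\zeta^a=\zeta^b=1$ for a coprime pair $\{a,b\}\subset\{n,m,k\}$, whence $\zeta=1$ and $P'=P$, so no genuine identification occurs. Consequently every component is of the form $D(f)^j=V(x-\alpha_j y)$ with $\alpha_j\neq 0,-1$, and parametrizing by $n_j(t)=(\alpha_j t,t)$ gives $f\circ n_j(t)=(\alpha_j^n t^n,\,t^m,\,(\alpha_j+1)^k t^k)$ with all three coefficients nonzero. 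Because $n,m,k$ are pairwise coprime, $\gcd(n,m,k)=1$, so by the Lemma preceding Proposition~\ref{propdasformulas} each $f\circ n_j$ is generically $1$-to-$1$; hence every $D(f)^j$ is an identification component, and its image is the monomial curve $t\mapsto(\alpha_j^n t^n,t^m,(\alpha_j+1)^k t^k)$ of multiplicity $\min(n,m,k)=n$.

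The identification components then group into $d/2$ pairs with equal image, where $d$ is even because pairwise coprimality allows at most one of $n,m,k$ to be even. Since $f$ is finitely determined its triple points are isolated, so distinct pairs have distinct image branches; thus $f(D(f))$ is a reduced curve with $d/2$ branches, each of multiplicity $n$, and additivity of the Hilbert--Samuel multiplicity over branches yields $m_0(f(D(f)))=dn/2$. For the transverse invariant I would apply Lemma~\ref{5}(b), namely $\mu_1(f(\mathbb{C}^2))=\mu(\tilde{Y}_0,0)+2\,m_0(f(D(f)))$, with $\tilde{Y}_0=f^{-1}(H)=V(a x^n+b y^m+c(x+y)^k)$ for a generic plane $H$. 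Because $n<m<k$, the inequality $mi+n(k-i)\ge nk>nm$ for $0\le i\le k$ shows that every monomial of $(x+y)^k$ lies strictly above the segment joining $(n,0)$ and $(0,m)$, so that segment is the only compact face of the Newton boundary and the principal part $ax^n+by^m$ is non-degenerate; Kouchnirenko's formula then gives $\mu(\tilde{Y}_0,0)=(n-1)(m-1)$, and combining with $2\,m_0(f(D(f)))=dn$ produces $\mu_1(f(\mathbb{C}^2))=(n-1)(m-1)+dn$.

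The step I expect to be the main obstacle is the multiplicity computation for $f(D(f))$: one must carefully exclude the three special lines as components so that every branch genuinely carries the lowest-degree term $x^n$, and one must justify that the reduced image has exactly $d/2$ distinct branches, which rests on the isolatedness of the triple points of a finitely determined germ together with the additivity of the Hilbert--Samuel multiplicity over the branches of a reduced curve. The Newton non-degeneracy underlying Kouchnirenko's formula is the other point needing verification, but the displayed inequality makes it routine.
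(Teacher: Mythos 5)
Your proof is correct and follows essentially the same route as the paper: identify the components of $D(f)$ as lines, use the gcd lemma to see each is an identification component whose image branch has multiplicity $n$, sum over the $d/2$ pairs, and finish with Lemma \ref{5}(b) and $\mu(\tilde{Y}_0,0)=(n-1)(m-1)$. The only differences are cosmetic: you recover the component count $d$ from Mond's formula $\mu(D(f))=(d-1)^2$ for a reduced homogeneous curve instead of citing Mond's Proposition 1.15 directly, and you supply explicit justifications (exclusion of $V(x+y)$, the Newton-polygon computation of $\mu(\tilde{Y}_0,0)$, the evenness of $d$) that the paper leaves implicit.
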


\begin{proof}
By (\cite{mond3}, Proposition $1.15$) it follows that $D(f)$ is a homogeneous curve with $d=nmk-n-m-k+2$ smooth irreducible components. Since $n,m$ and $k$ are coprime in pairs and $n<m<k$, $d$ is always even. Let $D(f)^j=V(\beta_jx-\gamma_jy)$ be an irreducible component of $D(f)$ and consider its parametrization $n_j:U_j\rightarrow \mathbb{C}^2$. Again, since $n,m,k$ are coprime in pairs, the map $f \circ n_j$ is generically $1$-to-$1$, then $D(f)^j$ is an identification component of $D(f)$. Notice that $V(x)$ and $V(y)$ can not be irreducible components of $D(f)$. So the image of each curve $D(f)^j$ by $f$ is a curve in $\mathbb{C}^3$ with multiplicity $n$, and

\begin{center}
$m_0(f(D(f)))=\dfrac{dn}{2}$
\end{center}

\noindent Since $\mu(\tilde{Y}_0,0)=(n-1)(m-1)$, using Lemma \ref{5}, we obtain that $\mu_1(f(\mathbb{C}^2))=(n-1)(m-1)+dn$.\end{proof}

\section{A case in which the conjecture is true}\label{section7}

$ \ \ \ \ $  We now present a class of families of map germs in which the conjecture is true. Before, we will need the following lemma.

\begin{lemma}\label{lemmaauxiliarcap4}
Let $f:(\mathbb{C}^2,0)\rightarrow (\mathbb{C}^3,0)$ be a finitely determined map germ of corank $1$. Let $F=(f_t,t)$ be a topologically trivial $1$-parameter unfolding of $f$. If every irreducible component of $f(D(f))$ is smooth, then $F$ is Whitney equisingular.
\end{lemma}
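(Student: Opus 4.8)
The plan is to use the characterization of Whitney equisingularity in Theorem~\ref{15}, which reduces the problem to showing that the two invariants $\mu(D(f_t))$ and $\mu_1(f_t(\mathbb{C}^2),0)$ are constant along the family. Since $F$ is topologically trivial, Theorem~\ref{18} tells us that $F$ is $\mu$-constant, i.e. $\mu(D(f_t))$ is already constant. So the entire burden falls on the second invariant $\mu_1(f_t(\mathbb{C}^2),0)$, and I need to show it does not jump.

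The natural approach is through Lemma~\ref{5}(b), which gives the decomposition
\begin{equation*}
\mu_1(f_t(\mathbb{C}^2),0)=\mu(\tilde{Y}_t,0)+2\cdot m_0(f_t(D(f_t)),0).
\end{equation*}
Thus it suffices to prove that each of the two terms on the right is constant. For the first term $\mu(\tilde{Y}_t,0)=\mu(f_t^{-1}(H),0)$, I expect topological triviality of $F$ to force constancy: since $F$ is topologically trivial there is a family of source homeomorphisms $\phi_t$ carrying the generic slice $f_0^{-1}(H)$ to $f_t^{-1}(H)$ (after adjusting $H$ by the target trivialization), and constancy of the Milnor number of this plane-curve slice should follow from $\mu$-constancy of the family together with the fact that $\mu(\tilde{Y}_t,0)$ is an upper semicontinuous invariant that cannot drop without forcing a corresponding drop elsewhere. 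The second term, $m_0(f_t(D(f_t)),0)$, is exactly where the hypothesis on smoothness of the components of $f(D(f))$ enters.

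The key step, and the one where the smoothness hypothesis does the real work, is showing $m_0(f_t(D(f_t)),0)$ is constant. The idea is to count multiplicities component by component. Because $f$ has corank $1$, its double point curve $D(f)$ and the family $D(f_t)$ behave well, and $\mu$-constancy implies (by the Corollary following Theorem~\ref{40}) that $C(f_t)$ and $T(f_t)$ are constant, so no irreducible components of the double point curve are created or destroyed and the combinatorial type (number of identification and fold components) is preserved along the family. If every irreducible component of $f(D(f))$ is smooth, then each component of the image double point curve is a smooth branch and hence contributes multiplicity exactly $1$ to $m_0(f_t(D(f_t)),0)$; under a topologically trivial unfolding these components deform to components that remain smooth (the smoothness being an open condition preserved by the topological trivialization, together with constancy of the number of components), so the total multiplicity $m_0(f_t(D(f_t)),0)$ equals the number of image components and stays constant. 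This is precisely the mechanism that fails in Examples~\ref{osegundocontraex} and~\ref{oterceirocontraex}, where an image component has multiplicity greater than $1$ at the origin and that multiplicity drops under deformation.

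The main obstacle will be rigorously justifying that smoothness of the image components is preserved and that no component degenerates under the topologically trivial unfolding—in other words, ruling out the possibility that a smooth branch of $f(D(f))$ deforms into a branch whose multiplicity changes, or that two branches collide. I expect this to require the structure of corank $1$ finitely determined germs (so that $D(f_t)$ is a well-controlled family of reduced curves with constant Milnor number, giving an equisingular family of curve germs in the source) combined with the fact that on smooth identification components the restriction $f_t|_{D(f_t)^j}$ remains an immersion onto a smooth image. Once constancy of both $\mu(\tilde{Y}_t,0)$ and $m_0(f_t(D(f_t)),0)$ is established, Lemma~\ref{5}(b) gives constancy of $\mu_1(f_t(\mathbb{C}^2),0)$, and Theorem~\ref{15} then yields Whitney equisingularity of $F$, completing the proof.
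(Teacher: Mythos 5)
Your overall strategy is the right one and close to the paper's, which disposes of the lemma in one line: since every irreducible component of $f(D(f))$ is smooth, $m_0(f(D(f)),0)$ is already at its minimum possible value (one per branch), and the conclusion follows from the upper semi-continuity of the multiplicity. But two of your justifications do not hold up. First, your claim that topological triviality forces $\mu(\tilde{Y}_t,0)$ to be constant is false in general: Example~\ref{oprimeiroexemplo} is a topologically trivial family in which $\mu(\tilde{Y}_0,0)=2$ while $\mu(\tilde{Y}_t,0)=1$ for $t\neq 0$. The reason the term is harmless here is the corank~$1$ hypothesis: writing $f_t(x,y)=(x,p_t(x,y),q_t(x,y))$ with $p_t,q_t$ of order $\geq 2$, the slice $\tilde{Y}_t=f_t^{-1}(H)=V(ax+bp_t+cq_t)$ is smooth for generic $H$, so $\mu(\tilde{Y}_t,0)=0$ identically. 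Better still, in corank~$1$ you can bypass $\tilde{Y}_t$ altogether by invoking Theorem~\ref{13}(b), which reduces Whitney equisingularity to the constancy of $\mu(D(f_t))$ and $m_0(f_t(D(f_t)))$ alone.

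Second, the step ``smoothness of the image components is preserved by the topological trivialization'' is not a valid argument: the trivializing maps $\Phi,\Psi$ are only homeomorphisms, and homeomorphisms preserve neither smoothness nor multiplicity (indeed, in Examples~\ref{osegundocontraex} and~\ref{oterceirocontraex} the family is topologically trivial and yet $m_0(f_t(D(f_t)))$ jumps). What actually closes the argument is semicontinuity: the multiplicity satisfies $m_0(f_t(D(f_t)),0)\le m_0(f(D(f)),0)$ for small $t$, while it is always bounded below by the number of branches of $f_t(D(f_t))$ through the origin (each branch contributes at least $1$). The hypothesis says precisely that at $t=0$ every branch contributes exactly $1$, so $m_0(f(D(f)),0)$ already equals this lower bound and cannot drop. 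Replacing your ``smoothness persists'' step by this squeeze, and your treatment of $\mu(\tilde{Y}_t,0)$ by the corank~$1$ observation (or by Theorem~\ref{13}(b)), turns your outline into the paper's proof.
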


\begin{proof}
Since the multiplicity $m_0(f(D(f)^j)))$ of each irreducible component of $f(D(f))$ is $1$, the result follows by the upper semi-continuity of the multiplicity.
\end{proof}

\begin{theorem}
Let $f:(\mathbb{C}^2,0)\rightarrow (\mathbb{C}^3,0)$ be a homogeneous finitely determined map germ of corank $1$. Write $f$ in the form $f(x,y)=(x,p(x,y),q(x,y))$ and let $n$ and $m$ be the degrees of $p$ and $q$, respectivelly. Let $F=(f_t,t)$ be a topologically trivial $1$-parameter unfolding of $f$. If $gcd(n,m)\neq 2$, then $F$ is Whitney equisingular.
\end{theorem}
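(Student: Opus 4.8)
The plan is to reduce the statement to Lemma~\ref{lemmaauxiliarcap4}, which already guarantees Whitney equisingularity once we know that every irreducible component of $f(D(f))$ is smooth. Since $f$ is homogeneous of corank $1$, Proposition~\ref{propdasformulas}(a) tells us that $D(f)=V(\prod_{i=1}^{d}(x-\alpha_iy))$ has $d=nm-n-m+1$ smooth irreducible components $D(f)^i=V(x-\alpha_iy)$. The image of each such line under $f$ is the plane curve parametrized by $f\circ n_i(t)$, where $n_i(t)=(\alpha_it,t)$; this image is smooth precisely when the parametrization is generically $1$-to-$1$, i.e.\ when the relevant exponents have greatest common divisor $1$. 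So the whole question becomes: under the hypothesis $\gcd(n,m)\neq 2$, does every component map with multiplicity $1$?

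First I would split into the two cases governed by Proposition~\ref{propdasformulas}(b). If $n$ and $m$ are not both even, then by part~(d) every component of $D(f)$ is already an identification component whose image is a smooth curve, so $m_0(f(D(f)^i))=1$ for all $i$ and Lemma~\ref{lemmaauxiliarcap4} applies immediately. The remaining case is when $n$ and $m$ are both even. Here part~(c) shows that the unique fold component is $D(f)^1=V(x)$, with $f\circ n_1(t)$ of the form $(a_nt^n,b_mt^m)$ (or a degenerate version), so that $m_0(f(D(f)^1))=\gcd(n,m)/1$ reduced appropriately, namely $n/2$ when $\gcd(n,m)=2$. But part~(c) also records that finite determinacy forces $\gcd(n,m)=2$ in this even--even situation; the hypothesis $\gcd(n,m)\neq 2$ therefore \emph{excludes} the even--even case entirely.

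The clean way to phrase the argument is thus by contradiction on the hypothesis. I would observe that if $n$ and $m$ were both even, then by Proposition~\ref{propdasformulas}(c) finite determinacy of $f$ would force $\gcd(n,m)=2$, contradicting $\gcd(n,m)\neq 2$. Hence $n$ and $m$ cannot both be even, and we are in the situation of Proposition~\ref{propdasformulas}(d): every irreducible component of $D(f)$ is an identification component, and the image of each is a smooth curve through the origin. Consequently $m_0(f(D(f)^i))=1$ for every $i$, so every irreducible component of $f(D(f))$ is smooth. Applying Lemma~\ref{lemmaauxiliarcap4} to the topologically trivial unfolding $F$ then yields that $F$ is Whitney equisingular.

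**Main obstacle.**
The only delicate point is making sure the case analysis is genuinely exhaustive and that the smoothness of the images really follows. The hypothesis $\gcd(n,m)\neq 2$ is doing exactly the work of ruling out the fold component of multiplicity $n/2>1$ that arises in the even--even case of Proposition~\ref{propdasformulas}(c); I would want to double-check that there is no other way for an image component to be singular (for instance a fold component in the odd case), but Proposition~\ref{propdasformulas}(d) settles this by showing that when $n,m$ are not both even there are no fold components at all, every component being an identification component with smooth image. Thus the substance of the theorem is entirely contained in Proposition~\ref{propdasformulas} and Lemma~\ref{lemmaauxiliarcap4}, and the proof is essentially a bookkeeping combination of the two.
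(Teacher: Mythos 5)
Your proposal is correct and follows essentially the same route as the paper: the hypothesis $\gcd(n,m)\neq 2$ excludes the even--even case of Proposition~\ref{propdasformulas}(c), so part~(d) applies, every component of $D(f)$ is an identification component with smooth image, and Lemma~\ref{lemmaauxiliarcap4} concludes. You merely make explicit the contradiction step that the paper's two-line proof leaves implicit.
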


\begin{proof}
By Proposition \ref{propdasformulas}, it follows that all components of $D(f)$ are identification components. Then the image of each component $D(f)^j$ by $f$ is a smooth curve in $\mathbb{C}^3$, and the result follows by Lemma \ref{lemmaauxiliarcap4}.
\end{proof}

\begin{flushleft}
\textit{Acknowlegments}. Thanks are due to the referee for many valuable comments and suggestions. We also would like to thank J.J. Nuño-Ballesteros for many helpful conversations. The first author acknowledges partial support by FAPESP, grant $2014/00304$-$2$ and CNPq, grant 306306/2015-8. The second author would like to thank CAPES for financial support.
\end{flushleft}

\small

\end{document}